\documentclass{amsart}
\usepackage{amssymb}
\usepackage[top= 1.5in, bottom=1.5in, left=1.5in, right=1.5in]{geometry}
\usepackage{mathrsfs}
\usepackage{graphicx}
\usepackage{comment}
\usepackage{float}  
\allowdisplaybreaks

\newcommand{\be}{\begin{equation}}
\newcommand{\ee}{\end{equation}}
\newcommand{\ba}{\begin{align}}
\newcommand{\ea}{\end{align}}

\newtheorem{theorem}{Theorem}[section]
\newtheorem{lemma}[theorem]{Lemma}

\newtheorem*{proposition*}{Proposition}
\newtheorem{corollary}[theorem]{Corollary}
\newtheorem*{theorem*}{Theorem}
\newtheorem*{corollary*}{Corollary}
\newtheorem*{cor*}{Corollary}

\theoremstyle{definition}
\newtheorem{definition}[theorem]{Definition}

\newtheorem{remark}[theorem]{Remark}

{\kern -4pt}

{\begin{list}{}{%
\settowidth{\labelwidth}{\textsf{{\it #1.}}}%
\setlength{\labelsep}{4mm}%
\setlength{\leftmargin}{\labelwidth}%
\addtolength{\leftmargin}{\labelsep}%
}}%
{\end{list}}

\def\beq{\begin{equation}}\def\enq{\end{equation}}

{\begin{list}{}{%
\settowidth{\labelwidth}{\textsf{{\it #1.}}}%
\setlength{\labelsep}{2mm}%
\setlength{\leftmargin}{\labelwidth}%
\addtolength{\leftmargin}{\labelsep}%
\addtolength{\leftmargin}{4mm}%
\setlength{\itemsep}{6pt}%
\setlength{\listparindent}{0pt}%
\setlength{\topsep}{3pt}%
}}%

\usepackage{color}
\usepackage[normalem]{ulem}
\usepackage{soul}

\usepackage{xcolor}
\newcommand{\smat}[4]{\left(\begin{smallmatrix}#1&#2\\#3&#4\end{smallmatrix}\right)}
\DeclareMathOperator{\ord}{ord}
\DeclareMathOperator{\Ord}{Ord}

\begin{document}

\title[Rogers--Ramanujan Continued Fraction]
{Eta-Quotient Representations for a {Three Parameter} Family of Modular Functions Associated with the Rogers--Ramanujan Continued Fraction}

\author[B. Paudel]{Bishnu Paudel}

\author[J. Sellers]{James A. Sellers}

\author[H. Wang]{Haiyang Wang}

\address{Mathematics and Statistics Department\\
University of Minnesota Duluth\\
Duluth, MN 55812, USA}
\email{bpaudel@d.umn.edu, jsellers@d.umn.edu, wan02600@d.umn.edu}

\begin{abstract}
In 2021, Chern and Tang introduced two families of two-parameter modular
functions associated with the Rogers--Ramanujan continued fraction.  They established recurrence relations that express the
members of these families in terms of eta quotients, and used these expressions to
obtain dissection formulas.
Motivated by their work, we construct a three-parameter
extension and derive the corresponding recursive eta quotient
representations. Our result also has applications to dissection formulas. In particular,
it is used in separate work to obtain $5$-dissection formulas for overpartitions with restricted odd differences.
\end{abstract}

\maketitle

\section{Introduction}
Let $\mathbb{H}$ denote the upper half-plane of the complex plane. For $\tau\in\mathbb{H}$ set
$q:=e^{2\pi i\tau}$, so that $|q|<1$. {Throughout this work, we} use the {standard Pochhammer} notation
\begin{equation*}
(a;q)_\infty:=\prod_{k=0}^{\infty}(1-aq^k),
\qquad
f_j:=(q^j;q^j)_\infty
\end{equation*}
{where $j\in\mathbb{Z}_{\ge 1}$.}
The Dedekind eta function is {given by}
\begin{equation*}
\eta(\tau)=\eta(q):=q^{1/24}(q;q)_\infty.
\end{equation*}
The Rogers--Ramanujan continued fraction is defined by
\begin{equation*}
r(\tau)=r(q):=
\cfrac{q^{1/5}}
{1+\cfrac{q}
{1+\cfrac{q^2}
{1+\cfrac{q^3}
{1+\cdots}}}}
=
q^{1/5}
\frac{(q;q^5)_\infty(q^4;q^5)_\infty}
{(q^2;q^5)_\infty(q^3;q^5)_\infty}.
\end{equation*}
For the product representation, see \cite[Theorem 7.3.3]{Berndt06}. This
continued fraction first appeared in the work of Rogers \cite{Rogers} in 1893, and
was rediscovered independently by Ramanujan, who recorded it in his first
letter to Hardy, dated January~16, 1913, and by Schur \cite{Schur}.
It is convenient to remove the
fractional power of $q$ and set
\begin{equation*}
R(q):=q^{-1/5}r(q).
\end{equation*}
For each positive integer $j$, we also write
\begin{equation*}
r_j(\tau):=r(j\tau)=q^{j/5}R(q^j).
\end{equation*}

Ramanujan recorded many remarkable identities satisfied by $R(q)$. Among
them are the reciprocal identities
\begin{align*}
\frac{1}{q^{1/5}R(q)}-1-q^{1/5}R(q)
&=\frac{(q^{1/5};q^{1/5})_\infty}{q^{1/5}f_5},\\
\frac{1}{qR(q)^5}-11-qR(q)^5
&=\frac{f_1^6}{qf_5^6}.
\end{align*}
See \cite[Sec.~7.4]{Berndt06}. Many later works study identities of this type. See, {for example},
\cite{Aygin,Baruah-Begum,CT2021,Gugg2012,HS}. In particular, for
$m\in\mathbb{Z}_{\geq 0}$ and $n\in\mathbb{Z}$, Chern and Tang
\cite{CT2021} introduced the two families
\begin{align*}
P(m,n)
&:=
\frac{1}{q^mR(q)^{m+2n}R(q^2)^{2m-n}}
+(-1)^{m+n}q^mR(q)^{m+2n}R(q^2)^{2m-n},\\
Q(m,n)
&:=
\frac{1}{q^mR(q)^{2m+3n}R(q^3)^{m-n}}
+(-1)^mq^mR(q)^{2m+3n}R(q^3)^{m-n},
\end{align*}
and used recurrence relations to express them in terms of the eta quotients
\begin{align*}
K&:=\frac{\eta(q^2)\eta(q^5)^5}
{\eta(q)\eta(q^{10})^5}
=q^{-1}\frac{f_2f_5^5}{f_1f_{10}^5},
\\
S&:=\frac{\eta(q)^3\eta(q^3)^3}
{\eta(q^5)^3\eta(q^{15})^3}
=q^{-2}\frac{f_1^3f_3^3}{f_5^3f_{15}^3},\\
T&:=\frac{\eta(q^3)\eta(q^5)^5}
{\eta(q)\eta(q^{15})^5}
=q^{-2}\frac{f_3f_5^5}{f_1f_{15}^5}.
\end{align*}
They also applied their results to derive dissection formulas for certain
partition functions. For further applications of their results, see~\cite{CLY,Gua2025,Tang25}.

In this paper we extend the construction of Chern and Tang to a
three-parameter family. For
$(m,n,p)\in\mathbb{Z}^3$, set
\begin{equation}
w=w(m,n,p):=
q^{-p}R(q)^{-2m-n}R(q^2)^{m-n-p}R(q^3)^{n-p}
=
r_1^{-2m-n}r_2^{m-n-p}r_3^{n-p},
\label{eq.w-def}
\end{equation}
and define
\begin{equation}
G(m,n,p):=w+\frac{(-1)^{m+n}}{w}.
\label{eq.G-def}
\end{equation}

Directly from \eqref{eq.w-def} and \eqref{eq.G-def}, {we see that, for any $m, n, p$,}
\begin{equation*}
G(-m,-n,-p)=(-1)^{m+n}\,G(m,n,p),
\end{equation*}
and the families of Chern and Tang are recovered from $G$ by
\begin{equation*}
P(m,n)=G(n,m,m),
\qquad
Q(m,n)=G(m+n,n,m).
\end{equation*}
We also require the eta quotients
\begin{align}
U&:=\frac{\eta(q^2)^3\eta(q^3)^3}
{\eta(q)^2\eta(q^6)^2\eta(q^{10})\eta(q^{15})}
=q^{-1}\frac{f_2^3f_3^3}{f_1^2f_6^2f_{10}f_{15}},
\label{eq.Udef}\\
V&:=\frac{\eta(q)^2\eta(q^6)\eta(q^{30})}
{\eta(q^3)\eta(q^{10})^2\eta(q^{15})}
=\frac{f_1^2f_6f_{30}}{f_3f_{10}^2f_{15}}.\label{eq.Vdef}
\end{align}

{The primary goal in this work is to extend the recurrence results of Chern and Tang to this 3--parameter setting.  Our}
main result is as follows.

\begin{theorem}\label{thm.main}
For all integers $m,n,p$, we have the recurrence relations:
\begin{align}
G(m+1,n,p) &= 4K^{-1}\,G(m,n,p) + G(m-1,n,p), \label{eq.recm}\\
G(m,n+1,p) &= (1-V)\,G(m,n,p) + G(m,n-1,p), \label{eq.recn}\\
G(m,n,p+1) &= (U-2)\,G(m,n,p) - G(m,n,p-1). \label{eq.recp}
\end{align}
Moreover, the following initial values hold:
\begin{align}
G(0,0,0) &= 2,\label{eq.000}\\
G(1,0,0) &= 4K^{-1},\label{eq.100}\\
G(0,1,1) &= K,\label{eq.011}\\
G(1,1,1) &= K + 2 + 4K^{-1},\label{eq.111}\\
G(1,1,0) &= 2 + 9T^{-1},\label{eq.110}\\
G(1,0,1) &= \tfrac14\bigl(T - S + 9T^{-1} + 6\bigr), \label{eq.101}\\
G(0,1,0) &= 1 - V,\label{eq.010}\\
G(0,0,1) &= U - 2.\label{eq.001}
\end{align}
\end{theorem}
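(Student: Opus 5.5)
The plan is to reduce all three recurrences to three scalar identities by noting that $w(m,n,p)$ is multiplicative in $(m,n,p)$. From \eqref{eq.w-def} we have $w(m,n,p)=a^m b^n c^p$ with
\begin{equation*}
a:=r_1^{-2}r_2,\qquad b:=r_1^{-1}r_2^{-1}r_3,\qquad c:=r_2^{-1}r_3^{-1}.
\end{equation*}
Write $s=(-1)^{m+n}$ and $w=w(m,n,p)$. Shifting $m$ by $\pm1$ multiplies $w$ by $a^{\pm1}$ and flips the sign $s$. Hence
\begin{equation*}
G(m+1,n,p)-G(m-1,n,p)=aw-\frac{s}{aw}-\frac{w}{a}+\frac{sa}{w}=(a-a^{-1})\,G(m,n,p).
\end{equation*}
The same computation with $b$ gives $G(m,n+1,p)-G(m,n-1,p)=(b-b^{-1})G(m,n,p)$. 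Shifting $p$ does not change the sign, so
\begin{equation*}
G(m,n,p+1)+G(m,n,p-1)=(c+c^{-1})G(m,n,p).
\end{equation*}
Since $G(1,0,0)=a-a^{-1}$, $G(0,1,0)=b-b^{-1}$ and $G(0,0,1)=c+c^{-1}$, the recurrences \eqref{eq.recm}--\eqref{eq.recp} are equivalent to the three initial values \eqref{eq.100}, \eqref{eq.010}, \eqref{eq.001}. The value \eqref{eq.000} is trivial, because $w(0,0,0)=1$.

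Next I would harvest everything available from Chern and Tang via $P(m,n)=G(n,m,m)$ and $Q(m,n)=G(m+n,n,m)$. This gives $G(1,0,0)=P(0,1)$, $G(0,1,1)=P(1,0)$, $G(1,1,1)=P(1,1)$, $G(1,1,0)=Q(0,1)$ and $G(1,0,1)=Q(1,0)$. So \eqref{eq.100}, \eqref{eq.011}, \eqref{eq.111}, \eqref{eq.110}, \eqref{eq.101} should follow from their initial values, up to checking conventions. Alternatively, \eqref{eq.100} is the classical degree-two modular equation $r_2/r_1^2-r_1^2/r_2=4K^{-1}$ of Ramanujan.

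The genuinely new content is \eqref{eq.010} and \eqref{eq.001}:
\begin{equation*}
\frac{r_3}{r_1r_2}-\frac{r_1r_2}{r_3}=1-V,\qquad \frac{1}{r_2r_3}+r_2r_3=U.
\end{equation*}
For these I would use the standard modular-function method. The quantities $r(j\tau)$ for $j=1,2,3$ are modular functions on $\Gamma_1(5j)$, so both sides are modular functions on $\Gamma_1(30)$. By Ligozat's criteria, the eta quotients $U$ and $V$ are modular functions on $\Gamma_0(30)$, as one checks from the order and weight conditions. I would compute the orders of both sides at every cusp of $\Gamma_1(30)$. For the $r_j$ this uses the known transformation and cusp behaviour of $r(\tau)$, namely orders $\pm1/5$ at the relevant cusps. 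For the eta quotients it uses Ligozat's formula. The valence formula then bounds the total pole order of the difference of the two sides. Finally, verifying that enough $q$-expansion coefficients agree (a Sturm-type check) shows that the difference is zero.

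I expect the main obstacle to be this cusp bookkeeping on $\Gamma_1(30)$, which has many cusps. One could reduce the work by observing that $r_3/(r_1r_2)$ and $r_2r_3$ behave well under suitable Atkin--Lehner or diamond operators, so that the relevant combinations descend to $\Gamma_0(30)$. If that is too messy, a fallback is to express $b-b^{-1}$ and $c+c^{-1}$ through known degree-3 and degree-6 modular equations for $R(q)$, combined with the Chern--Tang values already in hand.
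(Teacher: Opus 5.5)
\textbf{Assessment.} Your reduction of \eqref{eq.recm}--\eqref{eq.recp} to \eqref{eq.100}, \eqref{eq.010}, \eqref{eq.001} via $w=a^mb^nc^p$ is the paper's product formula \eqref{eq.productG} specialized to unit vectors. Taking the other five values from the literature and proving \eqref{eq.010}, \eqref{eq.001} by cusp orders plus the valence formula is also the paper's plan.

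\textbf{The modularity step is wrong as stated.} The individual $r(j\tau)$ are not modular functions on $\Gamma_1(5j)$. With $\zeta=e^{2\pi i/5}$ we have $r(\tau+1)=\zeta\, r(\tau)$, while $\smat{1}{1}{0}{1}\in\Gamma_1(5)$; so $r$ is invariant under $\Gamma(5)$, not $\Gamma_1(5)$. The missing ingredient is the paper's Lemma~\ref{lem.transform}, derived from Yang's transformation law for $\eta_{5j,g}$. For $\gamma=\smat abcd\in\Gamma_0(30)$ it gives
\[
r_j(\gamma\tau)=\zeta^{jab}r_j(\tau)\ \text{ if } a\equiv\pm1 \pmod 5,
\qquad
r_j(\gamma\tau)=-\zeta^{jab}/r_j(\tau)\ \text{ if } a\equiv\pm2 \pmod 5.
\]
Your monomials $b=r_1^{-1}r_2^{-1}r_3$ and $c=r_2^{-1}r_3^{-1}$ are invariant only because the $j$-weighted exponent sums $-1-2+3$ and $-2-3$ are divisible by $5$, so the roots of unity cancel. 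Your conclusion that both sides are modular on $\Gamma_1(30)$ is therefore true, but it needs this argument, not modularity of each $r_j$.

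\textbf{The same lemma gives the descent you wanted.} For $a\equiv\pm2\pmod 5$ it sends $w\mapsto(-1)^{m+n}w^{-1}$, so every $G(m,n,p)$ is invariant under all of $\Gamma_0(30)$ (Lemma~\ref{lem.modularity}). This cuts the bookkeeping to the eight cusps $1/c$, $c\mid 30$. There the lower bounds away from $\infty$ total $-4$ and $-3$, so agreement of $q$-expansions through $q^4$ and $q^3$ suffices. Your $\Gamma_1(30)$ route would also work once the cancellation is shown, but it needs order computations at $32$ cusps and many more coefficients.

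\textbf{A slip in your display.} The identity to prove is $r_2r_3+(r_2r_3)^{-1}=U-2$, not $U$. At $\infty$, $U=q^{-1}+2+2q+\cdots$ while $G(0,0,1)=q^{-1}+2q+\cdots$.
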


The identities \eqref{eq.100} and \eqref{eq.011} are, respectively,
(1.20) and (1.19) of Baruah and Begum
\cite[Lemma~1.3]{Baruah-Begum}. Gugg proved \eqref{eq.110} in
\cite[Theorem~5.1(iv)]{Gugg2012}. Identities \eqref{eq.111} and \eqref{eq.101} appear in
Chern and Tang \cite[Table~1 and (1.14)]{CT2021}. The identities~\eqref{eq.010} and
\eqref{eq.001} are proved in Section~\ref{sec.init}. The proofs of the recurrences \eqref{eq.recm}--\eqref{eq.recp}  are given in
Section~\ref{sec.product}.

\begin{remark}
Suppose that $(a,b,c)\in\mathbb{Z}^{3}$ satisfies
$a+2b+3c\equiv0\pmod5$, and define
\begin{equation*}
t=\tfrac25(a+2b+3c).
\end{equation*}
Then, using \eqref{eq.w-def} and \eqref{eq.G-def}, we have
\begin{equation*}
\begin{aligned}
&R(q)^{a}R(q^{2})^{b}R(q^{3})^{c}
+
\frac{(-1)^{a+b+c}}
{q^{t}R(q)^{a}R(q^{2})^{b}R(q^{3})^{c}}\\
&\qquad
=q^{-t/2}G\bigl(
\tfrac15(-2a+b-c),\;
\tfrac15(-a-2b+2c),\;
-\tfrac15(a+2b+3c)
\bigr),
\end{aligned}
\end{equation*}
which is useful for rewriting expressions of the type on the left-hand side in
terms of $G(m,n,p)$.
\end{remark}

Theorem~\ref{thm.main} is one of the main tools {utilized} in \cite{PSW}, where it is used
to obtain $5$-dissection formulas and congruences modulo $5$ for {the} function {$\overline{t}(n)$ which counts the number of overpartitions of $n$ with restricted odd differences. For further details on {$\overline{t}(n)$, see \cite{BDLM, HS2023}. 

The recurrence relations and initial values in
Theorem~\ref{thm.main} imply the following.
\begin{corollary}
For every $(m,n,p)\in\mathbb{Z}^{3}$,
\begin{equation*}
G(m,n,p)\in
\mathbb{Q}\bigl[K^{\pm1},\,S,\,T^{\pm1},\,U,\,V\bigr].
\end{equation*}
\end{corollary}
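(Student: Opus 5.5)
The plan is to prove the Corollary by induction, walking through $\mathbb{Z}^3$ one coordinate at a time and using the three recurrences \eqref{eq.recm}--\eqref{eq.recp} of Theorem~\ref{thm.main}. Put $\mathcal{A}:=\mathbb{Q}[K^{\pm1},S,T^{\pm1},U,V]$. The coefficients $4K^{-1}$, $1-V$ and $U-2$ all lie in $\mathcal{A}$. Each recurrence can also be run backwards, since it can be solved for the lower-index term:
\begin{align*}
G(m-1,n,p)&=G(m+1,n,p)-4K^{-1}G(m,n,p),\\
G(m,n,p-1)&=(U-2)G(m,n,p)-G(m,n,p+1).
\end{align*}
The $n$-recurrence rearranges in the same way. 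Hence, along any coordinate line, two consecutive values in $\mathcal{A}$ force every value on that line into $\mathcal{A}$, by a two-sided induction on the varying coordinate.

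\textbf{Step 1 ($p$-axis).} By \eqref{eq.000} and \eqref{eq.001}, $G(0,0,0)=2$ and $G(0,0,1)=U-2$ lie in $\mathcal{A}$. The line principle applied to \eqref{eq.recp} then gives $G(0,0,p)\in\mathcal{A}$ for all $p$.

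\textbf{Step 2 ($(n,p)$-plane).} To run \eqref{eq.recn} along $(0,\cdot,p)$ for fixed $p$, we need $G(0,0,p)$ and $G(0,1,p)$. For the latter, \eqref{eq.010} and \eqref{eq.011} give $G(0,1,0)=1-V$ and $G(0,1,1)=K$, both in $\mathcal{A}$. Applying \eqref{eq.recp} along the line $(0,1,\cdot)$ gives $G(0,1,p)\in\mathcal{A}$ for all $p$. Now \eqref{eq.recn} yields $G(0,n,p)\in\mathcal{A}$ for all $n,p$.

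\textbf{Step 3 (all of $\mathbb{Z}^3$).} For fixed $(n,p)$ we need $G(1,n,p)$. The same two-step procedure applies with first coordinate $1$. The seeds $G(1,0,0)=4K^{-1}$, $G(1,0,1)=\tfrac14(T-S+9T^{-1}+6)$, $G(1,1,0)=2+9T^{-1}$ and $G(1,1,1)=K+2+4K^{-1}$ come from \eqref{eq.100}, \eqref{eq.101}, \eqref{eq.110} and \eqref{eq.111}; all lie in $\mathcal{A}$, using that $\tfrac14\in\mathbb{Q}$ and $T^{-1}\in\mathcal{A}$. Propagating with \eqref{eq.recp} along $(1,0,\cdot)$ and $(1,1,\cdot)$, and then with \eqref{eq.recn}, gives $G(1,n,p)\in\mathcal{A}$ for all $n,p$. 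Finally, for each fixed $(n,p)$, \eqref{eq.recm} run in both directions from $G(0,n,p)$ and $G(1,n,p)$ gives $G(m,n,p)\in\mathcal{A}$ for every $m$.

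There is no real obstacle, since the argument is pure bookkeeping once Theorem~\ref{thm.main} is granted. The only points needing care are the following.
\begin{itemize}
\item The backward direction of each recurrence requires only that the coefficient of the lowest term be $\pm1$, not any inversion, so no denominators outside $K^{\pm1},T^{\pm1}$ are introduced.
\item The eight initial values are exactly the vertices of the unit cube $\{0,1\}^3$, which is precisely the seed set the three two-term recurrences need.
\end{itemize}
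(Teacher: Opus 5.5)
Your proposal is correct and is the argument the paper intends. The paper only remarks that the corollary follows from the recurrences \eqref{eq.recm}--\eqref{eq.recp} together with the eight initial values on $\{0,1\}^3$; your two-sided induction is a faithful unpacking of that remark, propagating along $p$, then $n$, then $m$, and using that each recurrence can be solved for its lowest term with coefficient $\pm1$.
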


{The above is an extension of the work on the corresponding 2--parameter result of Chern and Tang \cite[Corollaries 1.5 and 1.6]{CT2021}.}

The remainder of the paper is organized as follows.
Section~\ref{sec.prelim} reviews the {necessary} background on modular functions. The initial values \eqref{eq.010} and \eqref{eq.001} are
established in Section~\ref{sec.init}. Section~\ref{sec.product} derives the
recurrences \eqref{eq.recm}, \eqref{eq.recn}, and \eqref{eq.recp},  which, combined with these initial values, complete the proof of
Theorem~\ref{thm.main}. 

\section{Preliminaries}\label{sec.prelim}
We first recall some background on modular functions. For
further details, see \cite{DS,Ono2004}. For a positive integer $N$, the \emph{principal congruence subgroup
of level $N$} is
\begin{equation*}
\Gamma(N):=
\left\{
\smat abcd\in\mathrm{SL}_{2}(\mathbb{Z}) :
a\equiv d\equiv1,\quad b\equiv c\equiv0\pmod N
\right\}.
\end{equation*}
A subgroup $\Gamma\leq\mathrm{SL}_{2}(\mathbb{Z})$ is called a
\emph{congruence subgroup} if it contains $\Gamma(N)$ for some
positive integer $N$. Our primary interest is the congruence subgroup
\begin{equation*}
\Gamma_{0}(N):=
\left\{
\smat abcd\in\mathrm{SL}_{2}(\mathbb{Z}) :
c\equiv0\pmod N
\right\}, 
\end{equation*}
which contains $\Gamma(N)$. The group $\mathrm{SL}_{2}(\mathbb{Z})$, and hence each of its
congruence subgroups, acts on the extended upper half-plane
\begin{equation*}
\mathbb{H}^{*}:=\mathbb{H}\cup\mathbb{Q}\cup\{\infty\}
\end{equation*}
by fractional linear transformations
\begin{equation*}
\gamma\tau=\frac{a\tau+b}{c\tau+d},
\qquad
\gamma=\smat abcd\in\mathrm{SL}_{2}(\mathbb{Z}).
\end{equation*}

The \emph{cusps} of a congruence subgroup $\Gamma$ are the
equivalence classes of $\mathbb{Q}\cup\{\infty\}$ under this action.
Let $s$ be a cusp of $\Gamma$, and choose
$\gamma\in\mathrm{SL}_{2}(\mathbb{Z})$ such that
$\gamma(\infty)=s$. The \emph{width} of $\Gamma$ at $s$, denoted
$h_{s}=h(\Gamma,s)$, is the least positive integer $h$ for which
\begin{equation*}
\gamma\smat{1}{h}{0}{1}\gamma^{-1}\in\pm\Gamma.
\end{equation*}
This definition is independent of the choice of $\gamma$.

\begin{definition}\label{def.modfn}
Let $\Gamma$ be a congruence subgroup. A meromorphic function
$F(\tau)\colon\mathbb{H}\to\mathbb{C}$ is a \emph{modular function}
on $\Gamma$ if
\begin{enumerate}
\item 
$F(\gamma\tau)=F(\tau)$ for all $\gamma\in\Gamma$,
\item for 
every $\gamma\in\mathrm{SL}_{2}(\mathbb{Z})$, there exist $n=n(\gamma)\in\mathbb{Z}_{\ge 1}$ and $m_0=m_0(\gamma)\in\mathbb{Z}$ such that the function $F(\gamma\tau)$ has an
expansion
\begin{equation*}
F(\gamma\tau)
=\sum_{m\geq m_0}c_{\gamma}(m)q_n^m,
\qquad
q_n:=e^{2\pi i\tau/n}.
\end{equation*}
\end{enumerate}
\end{definition}

Let $F(\tau)$ be a modular function on $\Gamma$,
let $s$ be a cusp of $\Gamma$ of width $h$, and let
$\gamma\in\mathrm{SL}_{2}(\mathbb{Z})$ satisfy $\gamma(\infty)=s$.
In Definition~\ref{def.modfn}, one may choose $n(\gamma)=h$.
With this choice, the \emph{order} of $F$
at $s$ is
\begin{equation*}
\Ord_{\Gamma}(F,s)
:=\min\{m\in\mathbb{Z}:c_{\gamma}(m)\neq0\}.
\end{equation*}

Writing the same expansion in terms of $q=e^{2\pi i\tau}$, we call
its least exponent the \emph{invariant order} $\ord(F,s)$, which does not depend on $\Gamma$. Since
$q_h=q^{1/h}$,
\begin{equation}\label{eq.twoorders}
\Ord_{\Gamma}(F,s)
=h(\Gamma,s)\,\ord(F,s).
\end{equation}

We next recall explicit descriptions of the cusps of $\Gamma_{0}(N)$
and their widths.

\begin{lemma}[{\cite[p.~354]{CL2004}}]\label{lem.cusps}
Let $N$ be a positive integer, and set $e_{d}:=\gcd(d,N/d)$ for each
$d\mid N$. For every $d\mid N$, choose a set
\begin{equation*}
S_{d}=\Bigl\{\tfrac{x_{1}}{d},\ \tfrac{x_{2}}{d},\ \dots\Bigr\},
\qquad
\gcd(x_{i},d)=1,\quad 0\le x_{i}\le d-1,\quad
x_{i}\not\equiv x_{j}\pmod{e_{d}}\ \ (i\neq j),
\end{equation*}
maximal with respect to these properties. Then
$\mathcal{S}:=\bigcup_{d\mid N}S_{d}$ is a complete set of
inequivalent cusps of $\Gamma_{0}(N)$.
\end{lemma}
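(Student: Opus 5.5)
The statement just given is Lemma~\ref{lem.cusps}, the classical parametrization of the cusps of $\Gamma_0(N)$. We will prove it by working with the action of $\Gamma_0(N)$ on $\mathbb{P}^1(\mathbb{Q})$ through reduced fractions $x/y$ with $\gcd(x,y)=1$. The goal is a complete invariant for $\Gamma_0(N)$-equivalence. That invariant will be the pair $\bigl(d,\ x\bmod e_d\bigr)$, suitably normalized, where $d=\gcd(y,N)$ and $e_d=\gcd(d,N/d)$.

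\textbf{Step 1: both quantities are invariant.} Let $\gamma=\smat abcd\in\Gamma_0(N)$, written as $\smat{\alpha}{\beta}{c}{\delta}$ to avoid a clash with the divisor $d$. It sends $x/y$ to $(\alpha x+\beta y)/(cx+\delta y)$, and this fraction is again reduced.
\begin{itemize}
\item Since $N\mid c$ and $\gcd(\delta,N)=1$, we have $\gcd(cx+\delta y,N)=\gcd(\delta y,N)=\gcd(y,N)$. So $d$ is invariant.
\item Next reduce the numerator modulo $e_d$. Because $e_d\mid d\mid y$, it is $\equiv \alpha x \pmod{e_d}$.
\item Also $\alpha\delta\equiv1\pmod N$. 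The subtle point is that the denominator's cofactor $y/d$ also changes. We therefore track the invariant $x\cdot (y/d)\bmod e_d$, or equivalently normalize $y=d$ exactly. Note that $\alpha$ is only determined modulo $N/d$-type data, so we must check it is $\equiv1\pmod{e_d}$ in the relevant normalization.
\end{itemize}
To handle this, first show every cusp is equivalent to one of the form $x'/d$ with $d\mid N$. Find $\gamma\in\Gamma_0(N)$ taking $x/y$ to a fraction whose denominator is exactly $d$. This needs solving $cx+\delta y=d$ with $N\mid c$. Write $y=dy'$ with $\gcd(y',N/d)=1$. We want $\delta y'+ (c/d)x \equiv$ a unit adjustment; this reduces to choosing $\delta\equiv y'^{-1}\pmod{N/d}$ and using $\gcd(x,y)=1$ together with a Dirichlet/CRT argument so that the pair $(c,\delta)$ extends to a matrix. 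For fractions with denominator exactly $d$, the identity $\alpha\equiv1\pmod{e_d}$ follows from $\alpha\delta-\beta c=1$ with $\delta\equiv1\pmod{N/d}$ forced, since the denominator is preserved.

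\textbf{Step 2: surjectivity onto the classes in $\mathcal S$.} Given $x'/d$, translation by $\smat1{k}01$ changes $x'$ by multiples of $d$. Together with the stabilizer matrices $\smat{1+ud}{\ast}{N u'}{\ast}$, this moves $x'$ to any residue congruent to it modulo $e_d$ that is coprime to $d$. A CRT argument modulo $d$ lifts residues mod $e_d$ to residues coprime to $d$. Hence each cusp equals some $x_i/d$ in the maximal set $S_d$.

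\textbf{Step 3: injectivity.} If $x_1/d\sim x_2/d'$, then Step~1 gives $d=d'$ and $x_1\equiv x_2\pmod{e_d}$. By the defining property of $S_d$, this forces $i=j$.

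\textbf{Main obstacle.} The delicate part is the normalization in Steps~1--2: producing the matrix in $\Gamma_0(N)$ whose bottom row realizes a prescribed denominator, and proving that the residue mod $e_d$ is genuinely invariant. Both rest on a careful CRT/Dirichlet choice of $\delta$ coprime to the needed modulus. Since this is standard, citing \cite{CL2004} or \cite{DS} suffices in the paper; the plan above is how we would reconstruct it.
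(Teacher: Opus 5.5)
There is a genuine gap, and it sits in Step 2, which is the actual content of the lemma. Your Steps 1 and 3 are essentially sound. $d=\gcd(y,N)$ is invariant. Once both cusps have denominator exactly $d$, the relation $\gamma(x,d)^{T}=\pm(x',d)^{T}$ forces $\delta\equiv\pm1\pmod{N/d}$, hence $\alpha\equiv\pm1\pmod{N/d}$ and $x'\equiv x\pmod{e_d}$ for either sign. You omitted the sign, but it is harmless. The reduction to denominator $d$ needs no Dirichlet argument either. The solutions of $(N/d)kx+\delta y'=1$ are $\delta=\delta_0-t(N/d)x$, and a CRT choice of $t$ over the primes of $d$ gives $\gcd(\delta,d)=1$, which is exactly primitivity of $(Nk,\delta)$.

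Step 2 is what collapses the $\varphi(d)$ fractions $x/d$ into $\varphi(e_d)$ classes. It is needed because $S_d$ is an arbitrary maximal set, so the representative from Step 1 must be moved onto the element of $S_d$ in its class mod $e_d$. Your two tools cannot do this:
\begin{itemize}
\item translations change $x'$ only by multiples of $d$;
\item $\smat{1+ud}{\ast}{Nu'}{\ast}$, when the denominator stays $\pm d$, produces the numerator $\pm\bigl((1+ud)x'+\beta d\bigr)\equiv\pm x'\pmod d$.
\end{itemize}
So you only ever reach $\pm x'\bmod d$. For example, in $\Gamma_0(20)$ with $d=10$ and $e_d=2$, the cusps $1/10$ and $3/10$ are equivalent via $\smat{-17}{2}{-60}{7}$, but $3\not\equiv\pm1\pmod{10}$.

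What you need is an element of $\Gamma_0(N)$ preserving the denominator $d$ whose upper-left entry is $\equiv1\pmod{N/d}$ (this is forced) and $\equiv x''x'^{-1}\pmod d$. These two congruences are compatible precisely when $x''\equiv x'\pmod{e_d}$. One clean construction:
\begin{itemize}
\item Take $\sigma=\smat{x'}{s}{d}{t}$ and $\sigma''=\smat{x''}{s''}{d}{t''}$ in $\mathrm{SL}_2(\mathbb{Z})$, and set $\gamma=\sigma''\smat{1}{h}{0}{1}\sigma^{-1}$.
\item Then $\gamma$ maps $x'/d$ to $x''/d$ and has lower-left entry $d(t-t''-dh)$.
\item Since $t\equiv x'^{-1}$ and $t''\equiv x''^{-1}\pmod d$, the hypothesis gives $e_d\mid t-t''$.
\item So some $h$ satisfies $dh\equiv t-t''\pmod{N/d}$, and then $\gamma\in\Gamma_0(N)$.
\end{itemize}
With this replacing your stabilizer matrices, plus the surjectivity $(\mathbb{Z}/d)^\times\to(\mathbb{Z}/e_d)^\times$ you already note, the argument is complete. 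The paper itself gives no proof here; it simply cites Chua--Ling.
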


\begin{lemma}[{\cite[Lemma~1.1]{Biagioli1989}}]\label{lem.width}
Let $a,c\in\mathbb{Z}$ with $\gcd(a,c)=1$. Then the width of
$\Gamma_{0}(N)$ at the cusp $a/c$ is
\begin{equation*}
h_{a/c}=\frac{N}{\gcd(N,c^{2})}.
\end{equation*}
\end{lemma}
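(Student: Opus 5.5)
We prove Lemma~\ref{lem.width} directly from the definition of the width. Write the cusp as $s=a/c$ with $\gcd(a,c)=1$, and choose $b,d\in\mathbb{Z}$ with $ad-bc=1$, so that $\gamma=\smat abcd\in\mathrm{SL}_2(\mathbb{Z})$ satisfies $\gamma(\infty)=a/c$. Since the width does not depend on the choice of $\gamma$, it is enough to find the least positive integer $h$ such that $\gamma\smat{1}{h}{0}{1}\gamma^{-1}\in\pm\Gamma_0(N)$.

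The first step is to compute the conjugate explicitly. Using $\gamma^{-1}=\smat{d}{-b}{-c}{a}$, we get
\begin{equation*}
\gamma\smat{1}{h}{0}{1}\gamma^{-1}=\smat{1-ach}{a^2h}{-c^2h}{1+ach}.
\end{equation*}
Since $\Gamma_0(N)$ contains $-I$, we have $\pm\Gamma_0(N)=\Gamma_0(N)$. Moreover, membership in $\Gamma_0(N)$ only requires the lower-left entry to be divisible by $N$. So the condition becomes $N\mid c^2h$.

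The second step is elementary number theory. Put $g=\gcd(N,c^2)$. Then $N\mid c^2h$ holds if and only if $(N/g)\mid (c^2/g)h$. Because $\gcd(N/g,c^2/g)=1$, this is equivalent to $(N/g)\mid h$. Hence the least positive such $h$ is $N/g=N/\gcd(N,c^2)$, as claimed.

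The argument has no real obstacle. The only point needing care is the independence of the width from the choice of $\gamma$ and of the representative fraction $a/c$. Any other $\gamma'$ with $\gamma'(\infty)=a/c$ has the form $\gamma'=\pm\gamma\smat{1}{k}{0}{1}$, and this factor commutes with $\smat{1}{h}{0}{1}$. Replacing $(a,c)$ by $(-a,-c)$ leaves $c^2$ unchanged. So the formula is well defined. If one wants it for every representative $\tau\gamma'$ of the cusp class in $\Gamma_0(N)$, conjugating by an element of $\Gamma_0(N)$ preserves membership in $\Gamma_0(N)$. One can also check directly that $\gcd(N,c^2)$ is invariant: if $\smat{\alpha}{\beta}{\gamma_0}{\delta}\in\Gamma_0(N)$ maps $a/c$ to $a'/c'$, then $c'=\gamma_0 a+\delta c$ with $N\mid\gamma_0$ and $\gcd(\delta,N)=1$, so $\gcd(N,c'^2)=\gcd(N,c^2)$.
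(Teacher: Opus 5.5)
Your proof is correct and complete. The conjugate $\gamma\smat{1}{h}{0}{1}\gamma^{-1}=\smat{1-ach}{a^2h}{-c^2h}{1+ach}$ is computed correctly, and $-I\in\Gamma_0(N)$ reduces the condition to $N\mid c^2h$. The coprimality of $N/\gcd(N,c^2)$ and $c^2/\gcd(N,c^2)$ then gives the minimal $h$, and this also covers the degenerate case $c=0$, $a=\pm1$, where the formula gives width $1$ at $\infty$.

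The paper gives no argument of its own, only the citation to Biagioli, and your direct conjugation computation is the standard proof of that fact. The only blemish is cosmetic: in your last paragraph, the group element written as $\tau$ in ``$\tau\gamma'$'' clashes with the variable $\tau\in\mathbb{H}$.
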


For a positive integer $N$ and integers $m_{\delta}$
indexed by the divisors $\delta$ of $N$, consider the
eta quotient
\begin{equation}\label{eq.eta}
F(\tau)=\prod_{\delta\mid N}\eta(\delta\tau)^{m_{\delta}}.
\end{equation}
The next two lemmas provide a criterion for such an $F(\tau)$ to be a modular function on $\Gamma_{0}(N)$ and a formula for its orders at the cusps.

\begin{lemma}[{\cite[Theorem 1]{Newman1959}}]
\label{lem.newman}
Let $F(\tau)$ be the eta quotient in \eqref{eq.eta}. Then $F(\tau)$ is a
modular function on $\Gamma_{0}(N)$ if 
\begin{enumerate}
\item $\displaystyle\sum_{\delta\mid N}m_{\delta}=0$,
\item $\displaystyle\sum_{\delta\mid N}\delta m_{\delta}
\equiv0\pmod{24}$,
\item $\displaystyle\sum_{\delta\mid N}\frac{N}{\delta}m_{\delta}
\equiv0\pmod{24}$,
\item $\displaystyle\prod_{\delta\mid N}\delta^{m_{\delta}}$
is the square of a rational number.
\end{enumerate}
\end{lemma}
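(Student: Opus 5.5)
The plan is to verify the two conditions of Definition~\ref{def.modfn} directly, using the transformation law of the Dedekind eta function under $\mathrm{SL}_2(\mathbb{Z})$. For $\gamma=\smat abcd\in\mathrm{SL}_2(\mathbb{Z})$ with $c>0$ I will use the classical formula
\begin{equation*}
\eta(\gamma\tau)=\varepsilon(a,b,c,d)\,\bigl(-i(c\tau+d)\bigr)^{1/2}\eta(\tau).
\end{equation*}
Here $\varepsilon$ is an explicit $24$th root of unity. For $c$ odd it equals $\bigl(\tfrac dc\bigr)\exp\bigl(\tfrac{\pi i}{12}[(a+d)c-bd(c^2-1)-3c]\bigr)$, and for $d$ odd there is an analogous expression with $\bigl(\tfrac cd\bigr)$.

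\textbf{Invariance.} Fix $\gamma=\smat abcd\in\Gamma_0(N)$. Replacing $\gamma$ by $-\gamma$ if necessary, I may assume $c\ge0$. If $c=0$, then $\gamma=\pm\smat1b01$, and $F(\tau+b)=F(\tau)$ follows from condition (2), since the $q$-expansion of $F$ involves only $q^{\frac1{24}\sum\delta m_\delta}$ times integral powers of $q$.

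If $c>0$, then for each $\delta\mid N$ I write
\begin{equation*}
\delta\cdot\gamma\tau=\gamma_\delta(\delta\tau),\qquad \gamma_\delta=\smat{a}{b\delta}{c/\delta}{d}\in\mathrm{SL}_2(\mathbb{Z}),
\end{equation*}
which is legitimate since $\delta\mid N\mid c$. The bottom row of $\gamma_\delta$ gives the automorphy factor $(\tfrac c\delta\cdot\delta\tau+d)=(c\tau+d)$. Hence
\begin{equation*}
F(\gamma\tau)=\Bigl(\prod_{\delta}\varepsilon(\gamma_\delta)^{m_\delta}\Bigr)\bigl(-i(c\tau+d)\bigr)^{\frac12\sum m_\delta}F(\tau).
\end{equation*}
Condition (1) removes the weight factor. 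It remains to show that $\prod_\delta\varepsilon(\gamma_\delta)^{m_\delta}=1$.

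\textbf{Multiplier computation (main obstacle).} I split into the cases $c$ odd and $d$ odd; since $\gcd(c,d)=1$, one of them always holds. In each case I substitute the explicit formula for $\varepsilon$. The exponent of the exponential becomes a linear combination of $\sum_\delta m_\delta\,\frac{c}{\delta}$, $\sum_\delta m_\delta\,\delta$ and $\sum_\delta m_\delta\,\delta\,\frac{c^2}{\delta^2}$, together with a multiple of $\sum m_\delta$.

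The key observation is that $N\mid c$ forces $\tfrac c\delta=\tfrac cN\cdot\tfrac N\delta$. Combined with conditions (2) and (3), and with $ad\equiv1$ together with $c^2\equiv$ squares modulo $24$-type congruences, this shows that the total exponent is an integer multiple of $2\pi i$.

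The Jacobi-symbol part is $\prod_\delta\bigl(\tfrac{d}{c/\delta}\bigr)^{m_\delta}$, or its analogue in the $d$ odd case. Using multiplicativity and quadratic reciprocity, it reduces to $\bigl(\tfrac{\prod\delta^{m_\delta}}{|d|}\bigr)$ times a power of a fixed symbol whose exponent is $\sum m_\delta=0$. Condition (4) makes the remaining symbol equal to $1$.

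I expect this bookkeeping to be the delicate step. The points needing care are the sign conventions for negative entries, the factors of $2$ when $\delta$ is even, and the parity case split. I would follow the standard treatment in \cite{Ono2004} to organize it.

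\textbf{Behaviour at cusps.} For arbitrary $\gamma\in\mathrm{SL}_2(\mathbb{Z})$ and $\delta\mid N$, I write $\smat\delta001\gamma=\gamma'\smat{A}{B}{0}{D}$ with $\gamma'\in\mathrm{SL}_2(\mathbb{Z})$ and $AD=\delta$. Then $\eta(\delta\gamma\tau)$ is a root of unity times $(c\tau+d)^{1/2}$ times $\eta\bigl((A\tau+B)/D\bigr)$. The latter has an expansion in powers of $q^{1/(24D)}$ with finitely many negative terms.

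Taking the product over $\delta$, the weight factors cancel by condition (1). This gives an expansion in $q_n$ for a suitable $n$, with a finite principal part. This is condition (2) of Definition~\ref{def.modfn}, so $F$ is a modular function on $\Gamma_0(N)$.
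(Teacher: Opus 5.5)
The paper gives no proof of this lemma. It quotes it from Newman \cite{Newman1959}; it is the weight-zero, trivial-character case of the Gordon--Hughes--Newman theorem, cf.\ \cite{Ono2004}. The paper uses it only to certify that $U$ and $V$ are modular functions on $\Gamma_0(30)$, so that Lemma~\ref{lem.ligorder} applies.

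You instead rederive the lemma from the multiplier system of $\eta$, and the route is sound. Writing $\delta\gamma\tau=\gamma_\delta(\delta\tau)$ with $\gamma_\delta=\smat{a}{b\delta}{c/\delta}{d}$ gives every factor the same automorphy factor $c\tau+d$, so (1) removes the weight. All $\gamma_\delta$ share the entry $d$, and all $c/\delta$ are odd when $c$ is odd, so your parity split is uniform in $\delta$. In the exponential part, the sums $\sum_\delta m_\delta\,\delta$, $\sum_\delta m_\delta\tfrac{c}{\delta}=\tfrac{c}{N}\sum_\delta m_\delta\tfrac{N}{\delta}$ and $\sum_\delta m_\delta\tfrac{c^2}{\delta}=\tfrac{c^2}{N}\sum_\delta m_\delta\tfrac{N}{\delta}$ are each divisible by $24$ by (2) and (3). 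They occur with integer coefficients, and (4) then disposes of the character. The Hermite-normal-form argument at the cusps is also correct, up to harmless constants such as $D^{-1/2}$. Your version gives a self-contained proof showing exactly where each hypothesis enters; the citation gives brevity, which is all the paper needs.

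Three bookkeeping points, none of them a gap.

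First, in weight zero you need neither $ad\equiv1$ nor any congruence for $c^2$ modulo $24$. Divisibility of the three sums above already makes the total exponent an integer multiple of $2\pi i$.

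Second, quadratic reciprocity is unnecessary. For $c$ odd, multiplicativity in the lower entry gives $\bigl(\tfrac{d}{c/\delta}\bigr)=\bigl(\tfrac{d}{c}\bigr)\bigl(\tfrac{d}{\delta}\bigr)$. The product therefore equals $\prod_p\bigl(\tfrac{d}{p}\bigr)^{\sum_\delta m_\delta v_p(\delta)}=1$ by (1) and (4), where $v_p$ is the $p$-adic valuation. For $d$ odd, use multiplicativity in the upper entry of $\bigl(\tfrac{c/\delta}{|d|}\bigr)$ in the same way. If you do flip symbols by reciprocity, the resulting signs are powers of $(-1)^{(c/\delta-1)/2}$. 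These depend on $\delta$, so they are not a power of one fixed symbol as you claim. Their weighted product is still $1$, because $\sum_\delta m_\delta\tfrac{c/\delta-1}{2}=\tfrac12\sum_\delta m_\delta\tfrac{c}{\delta}$ is even by (1) and (3).

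Third, the $c$-odd multiplier you display is the one paired with $(c\tau+d)^{1/2}$ in Knopp's normalization. Paired with $(-i(c\tau+d))^{1/2}$ it needs an extra factor $e^{\pi i/4}$. This factor is independent of $\delta$, so it disappears by (1).
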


\begin{lemma}[{\cite[Prop~3.2.8]{Ligozat1975}}]
\label{lem.ligorder}
Let $F(\tau)$ be the eta quotient in \eqref{eq.eta}, and suppose that
$F(\tau)$ is a modular function on $\Gamma_{0}(N)$. Let $a$ and $c$ be
positive integers such that $c\mid N$ and $\gcd(a,c)=1$. Then
\begin{equation*}
\Ord_{\Gamma_{0}(N)}\left(F,\frac{a}{c}\right)
=
\frac{N}{24c\gcd(c,N/c)}
\sum_{\delta\mid N}
\frac{\gcd(c,\delta)^{2}m_{\delta}}{\delta}.
\end{equation*}
\end{lemma}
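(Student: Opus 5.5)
The statement immediately above is Ligozat's formula (Lemma~\ref{lem.ligorder}). The plan is to expand each factor $\eta(\delta\tau)$ at the cusp $a/c$ using the transformation law of $\eta$ under $\mathrm{SL}_2(\mathbb{Z})$. This gives the invariant order $\ord(F,a/c)$, and we then convert to $\Ord_{\Gamma_0(N)}$ using \eqref{eq.twoorders} and Lemma~\ref{lem.width}.

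\textbf{Step 1: factor the scaled matrix.} Choose $\gamma=\smat abcd\in\mathrm{SL}_2(\mathbb{Z})$ with $\gamma(\infty)=a/c$. Fix $\delta\mid N$ and put $g=g_\delta:=\gcd(c,\delta)$; since $\gcd(a,c)=1$ we also have $g=\gcd(\delta a,c)$. The integer matrix $\smat{\delta a}{\delta b}{c}{d}$ has determinant $\delta$ and first column with gcd $g$. A Hermite-type decomposition therefore gives
\begin{equation*}
\smat{\delta a}{\delta b}{c}{d}=\gamma_\delta\smat{g}{B_\delta}{0}{\delta/g},
\qquad \gamma_\delta=\smat{*}{*}{c_\delta'}{d_\delta'}\in\mathrm{SL}_2(\mathbb{Z}),
\end{equation*}
for some integer $B_\delta$. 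Comparing bottom rows gives $c_\delta'=c/g$ and $c_\delta'B_\delta+d_\delta'\delta/g=d$.

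\textbf{Step 2: apply the eta transformation law.} Set $\tau_\delta=(g\tau+B_\delta)g/\delta$. Then
\begin{equation*}
\eta(\delta\gamma\tau)=\eta(\gamma_\delta\tau_\delta)=\epsilon_\delta\,(c_\delta'\tau_\delta+d_\delta')^{1/2}\,\eta(\tau_\delta),
\end{equation*}
where $\epsilon_\delta$ is a $24$th root of unity. By the bottom-row relations,
\begin{equation*}
c_\delta'\tau_\delta+d_\delta'=\frac{g}{\delta}(c\tau+d).
\end{equation*}
Raising to the power $m_\delta$ and multiplying over $\delta\mid N$, the automorphy factors combine to
\begin{equation*}
\text{(nonzero constant)}\cdot(c\tau+d)^{\frac12\sum_\delta m_\delta}.
\end{equation*}
This is constant because $\sum_\delta m_\delta=0$, which holds since $F$ is a modular function (weight zero).

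\textbf{Step 3: read off the leading exponent.} Each $\eta(\tau_\delta)$ equals a root of unity times $q^{g^2/(24\delta)}\prod_{k\ge1}(1-\zeta^k q^{kg^2/\delta})$ for a suitable root of unity $\zeta$. Hence
\begin{equation*}
F(\gamma\tau)=C\,q^{\frac1{24}\sum_\delta g_\delta^2m_\delta/\delta}\,(1+O(q^{\epsilon})),
\qquad C\neq0,
\end{equation*}
and so
\begin{equation*}
\ord(F,a/c)=\frac1{24}\sum_{\delta\mid N}\frac{\gcd(c,\delta)^2m_\delta}{\delta}.
\end{equation*}

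\textbf{Step 4: convert to $\Ord_{\Gamma_0(N)}$.} For $c\mid N$ we have $\gcd(N,c^2)=c\gcd(c,N/c)$. Lemma~\ref{lem.width} then gives the width $h_{a/c}=N/(c\gcd(c,N/c))$, and \eqref{eq.twoorders} yields exactly the stated formula.

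The main obstacle is the bookkeeping in Steps 1--2. One must verify that the decomposition exists with the stated bottom-row relations, and that the automorphy factors for different $\delta$ really collapse to a constant multiple of $(c\tau+d)^{\sum m_\delta/2}$. The multiplier $\epsilon_\delta$ and branch choices of the square roots are harmless, since they only affect the constant $C$ and not the leading exponent.
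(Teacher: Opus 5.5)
Your proof is correct. The paper does not prove this lemma; it quotes it from Ligozat. What you have written is the standard derivation of that formula, and it uses only the transformation law of $\eta$, Lemma~\ref{lem.width} and \eqref{eq.twoorders}:
\begin{itemize}
\item[(i)] factor $\smat{\delta a}{\delta b}{c}{d}$ as an element of $\mathrm{SL}_2(\mathbb{Z})$ times an upper-triangular matrix with diagonal $\bigl(\gcd(c,\delta),\,\delta/\gcd(c,\delta)\bigr)$;
\item[(ii)] check that the automorphy factors become $\frac{\gcd(c,\delta)}{\delta}(c\tau+d)$;
\item[(iii)] read off the leading exponent $\frac1{24}\sum_{\delta}\gcd(c,\delta)^2m_\delta/\delta$;
\item[(iv)] multiply by the width $N/\gcd(N,c^2)=N/(c\gcd(c,N/c))$.
\end{itemize}

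The one step you assert without argument is $\sum_\delta m_\delta=0$. It follows from your own Step~2 applied to a $\gamma\in\Gamma_0(N)$ with lower-left entry $c=N$. There $\gcd(c,\delta)=\delta$, so $F(\gamma\tau)$ equals a nonzero constant times $(c\tau+d)^{\frac12\sum_\delta m_\delta}F(\tau)$. Invariance of $F$ then forces $\sum_\delta m_\delta=0$.
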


Let $N\ge 1$ be an integer, and let $g\in\mathbb{Z}$ satisfy
$N\nmid g$. Following \cite{Yang2004} (see also \cite{Gua_RR}), the \emph{generalized Dedekind eta function}
$\eta_{N,g}$ is defined on $\mathbb{H}$ by
\begin{equation}\label{eq.genetadef}
\eta_{N,g}(\tau)
:=q^{NB_{2}(g/N)/2}
\prod_{m=1}^{\infty}
\bigl(1-q^{N(m-1)+g}\bigr)
\bigl(1-q^{Nm-g}\bigr),
\end{equation}
where $q=e^{2\pi i\tau}$ and
\begin{equation*}
B_{2}(x):=x^{2}-x+\tfrac16
\end{equation*}
is the second Bernoulli polynomial. Replacing $\tau$ by $j\tau$
in \eqref{eq.genetadef} gives
\begin{equation}\label{eq.dilation}
\eta_{N,g}(j\tau)=\eta_{jN,jg}(\tau)
\qquad (j\in\mathbb{Z}_{\ge 1}).
\end{equation}

\begin{lemma}[{\cite[Corollary~2]{Yang2004}}]
Let $N\ge1$ and $g\in\mathbb{Z}$ with $N\nmid g$.
\begin{enumerate}
\item[(i)] We have
\begin{equation}\label{eq.Nshift}
\eta_{N,g+N}(\tau)=\eta_{N,-g}(\tau)=-\eta_{N,g}(\tau).
\end{equation}
\item[(ii)] For every $b\in\mathbb{Z}$,
\begin{equation}\label{eq.translation}
\eta_{N,g}(\tau+b)=e^{\pi ibNB_{2}(g/N)}\,\eta_{N,g}(\tau).
\end{equation}
\item[(iii)] If $\gamma=\smat abcd\in\Gamma_{0}(N)$ with $c\ne0$, then
\begin{equation}\label{eq.cne0}
\eta_{N,g}(\gamma\tau)
=\varepsilon(a,bN,c/N,d)\,
e^{\pi i\left(\frac{g^{2}ab}{N}-gb\right)}\,
\eta_{N,ag}(\tau),
\end{equation}
where 
\begin{equation*}
\varepsilon(a,b,c,d)=
\begin{cases}
e^{\pi i\left(\frac{bd(1-c^{2})+c(a+d-3)}{6}\right)}, & c\ \text{odd},\\[3pt]
-i\,e^{\pi i\left(\frac{ac(1-d^{2})+d(b-c+3)}{6}\right)}, & d\ \text{odd}.
\end{cases}
\end{equation*}
\end{enumerate}
\end{lemma}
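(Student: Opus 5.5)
The plan is to treat $\eta_{N,g}$ as a theta quotient: parts (i) and (ii) then follow from the product definition \eqref{eq.genetadef}, and part (iii) follows from the modular transformation laws of $\vartheta_1$ and $\eta$.

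\textbf{Parts (i) and (ii).} Both come directly from \eqref{eq.genetadef}. I will use the Bernoulli identities
\[
B_2(x+1)=B_2(x)+2x,\qquad B_2(-x)=B_2(x)+2x .
\]
For $\eta_{N,-g}$, the product becomes $\prod_{m\ge1}(1-q^{N(m-1)-g})(1-q^{Nm+g})$.
\begin{itemize}
\item Its $m=1$ factor is $1-q^{-g}=-q^{-g}(1-q^{g})$.
\item After reindexing, the remaining factors reproduce the product for $\eta_{N,g}$.
\item The prefactor changes from $q^{NB_2(g/N)/2}$ to $q^{NB_2(g/N)/2+g}$, which cancels the $q^{-g}$ and leaves the sign $-1$.
\end{itemize}
The shift $g\mapsto g+N$ is handled the same way. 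The product loses the factor $1-q^{g}$ and gains $1-q^{-g}$, which is compensated by the extra $q^{g}$ from $B_2(x+1)=B_2(x)+2x$. For (ii), the product part is a power series in integral powers of $q$, so it is invariant under $\tau\mapsto\tau+b$. Only the prefactor changes, contributing $e^{2\pi i b\,NB_2(g/N)/2}$.

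\textbf{Part (iii): theta representation.} By the Jacobi triple product, with $\tau'=N\tau$ and $z=g\tau$, I will write
\[
\eta_{N,g}(\tau)=\zeta\, e^{\pi i g^2\tau/N}\,\frac{\vartheta_1(g\tau\mid N\tau)}{\eta(N\tau)}
\]
for an explicit root of unity $\zeta$. This is essentially a Klein form $\mathfrak k_{(g/N,0)}$ times $\eta^2$ in the variable $N\tau$. For $\gamma=\smat abcd\in\Gamma_0(N)$ we have $N\gamma\tau=\gamma'(N\tau)$ with $\gamma'=\smat{a}{bN}{c/N}{d}\in\mathrm{SL}_2(\mathbb Z)$. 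The steps are then:
\begin{enumerate}
\item Since $\eta(\gamma'\tau')$ has the standard eta multiplier, I will take $\varepsilon(a,bN,c/N,d)$ to be this multiplier in Petersson's explicit form. That form splits into cases according to the parity of the lower-left or lower-right entry, which explains the two cases in the definition of $\varepsilon$.
\item Apply the Jacobi transformation of $\vartheta_1(z\mid\tau')$ under $\gamma'$. This requires the elliptic argument $g\gamma\tau=\frac{g(a\tau+b)}{c\tau+d}$, written as $\frac{z'}{(c/N)\tau'+d}$. Here $z'=g(a\tau+b)=ag\tau+gb$, so in the lattice $\mathbb Z+\mathbb Z\tau'$ this is $ag\tau$ shifted by the integer $gb$.
\item Use the quasi-periodicity $\vartheta_1(z+1)=-\vartheta_1(z)$, together with the Gaussian factor $e^{\pi i (c/N) z'^2/(\ldots)}$ from the Jacobi transformation, to move from argument $ag\tau+gb$ to $ag\tau$. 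The factors $e^{\pi i g^2\tau/N}$ and the Gaussian combine into $e^{\pi i (ag)^2\tau/N}$ times the constant phase $e^{\pi i(g^2ab/N-gb)}$.
\item The result is $\eta_{N,ag}(\tau)$. The weight factors $(c\tau+d)^{\pm1/2}$ cancel between numerator and denominator, and the $\vartheta_1$ and $\eta$ multipliers combine into $\varepsilon$.
\end{enumerate}
Part (i) justifies allowing any integer $ag$ as the index, since arbitrary representatives of $ag$ modulo $N$ are related by explicit signs.

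\textbf{Main obstacle.} The hard part is the bookkeeping of roots of unity in (iii). The $\vartheta_1$ multiplier is the cube of the eta multiplier, and this must combine correctly with the $1/\eta$ factor. The Gaussian cross term $2ag\tau\cdot gb$ must produce exactly $g^2ab/N$, and the quasi-periodic sign must produce $-gb$. I would first check the two generator cases $\gamma=T^b$ (against (ii)) and $\gamma=\smat{1}{0}{N}{1}$ before doing the general case. If the direct computation becomes unwieldy, I would instead cite the known transformation law of Klein forms, $\mathfrak k_{(a_1,a_2)}(\gamma\tau)=(c\tau+d)^{-1}\mathfrak k_{(a_1,a_2)\gamma}(\tau)$, together with their translation formula, and then just match phases.
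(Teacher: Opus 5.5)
The paper gives no proof of this lemma; it simply quotes Yang's Corollary~2. Your theta-function route is a standard and sound way to derive it, and is equivalent to the Klein-form route you list as a fallback. Parts (i) and (ii) are correct, and so is the representation $\eta_{N,g}(\tau)=-i\,e^{\pi i g^2\tau/N}\vartheta_1(g\tau\mid N\tau)/\eta(N\tau)$, with the normalization $\vartheta_1'(0\mid\tau)=2\pi\eta(\tau)^3$. Steps 2--4 also go through.

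There is, however, a concrete error in step 1. The $\varepsilon$ in the statement is \emph{not} the eta multiplier of $\gamma'$. It is the multiplier of $\eta^2$:
\[
\eta(\gamma\tau)^2=\varepsilon(a,b,c,d)\,(c\tau+d)\,\eta(\tau)^2 .
\]
For example, $\varepsilon(0,-1,1,0)=-i$ matches $\eta(-1/\tau)^2=-i\tau\,\eta(\tau)^2$. Likewise $\varepsilon(1,1,0,1)=e^{\pi i/6}$ matches $\eta(\tau+1)^2=e^{\pi i/6}\eta(\tau)^2$. The eta multipliers themselves are square roots of these. Petersson's formula carries a Jacobi symbol and exponents with denominator $12$, so it cannot coincide with the stated two-case expression. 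As written, your steps 1 and 4 contradict each other. If you follow step 1 literally, the final multiplier is off by a factor of the eta multiplier.

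The repair is already implicit in your step 4. Write $\eta(\gamma'\tau')=\epsilon\,(c'\tau'+d)^{1/2}\eta(\tau')$. Then $\vartheta_1$ transforms with $\epsilon^3$ for the same branch: differentiate its transformation law at $z=0$ and use $\vartheta_1'(0\mid\tau)=2\pi\eta(\tau)^3$. The quotient $\vartheta_1/\eta$ therefore carries exactly $\epsilon^2$, and you should define $\varepsilon(a,bN,c/N,d)$ to be this square.

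You then still owe a check that squaring Petersson's formula reproduces the two cases in the statement; the Jacobi symbols square to $1$. Handle $c<0$ by passing to $-\gamma$; the stated formula satisfies $\varepsilon(-\gamma)=-\varepsilon(\gamma)$, as it must.

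For the exponential bookkeeping, note that the factor to combine is $e^{\pi i g^2(\gamma\tau)/N}$, not $e^{\pi i g^2\tau/N}$. No cross-term matching is needed: with $z'=g(a\tau+b)$,
\[
e^{\pi i g^2\gamma\tau/N}\,e^{\pi i c z'^2/(N(c\tau+d))}=e^{\pi i g^2(a\tau+b)(1+c(a\tau+b))/(N(c\tau+d))}=e^{\pi i g^2a(a\tau+b)/N},
\]
since $1+c(a\tau+b)=a(c\tau+d)$ by $ad-bc=1$. Combined with the sign $(-1)^{gb}$ from $\vartheta_1(z+gb\mid\tau')=(-1)^{gb}\vartheta_1(z\mid\tau')$, this gives exactly $e^{\pi i(ag)^2\tau/N}\,e^{\pi i(g^2ab/N-gb)}$.
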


Equation~\eqref{eq.Nshift} implies that
\begin{equation}
\eta_{N,g+kN}(\tau)=(-1)^k\eta_{N,g}(\tau),
\label{eq.indexshift}
\end{equation}
for every
$k\in\mathbb{Z}$, and
\begin{equation}
\eta_{N,N-g}(\tau)=\eta_{N,g}(\tau).
\label{eq.reflect}
\end{equation}

\begin{lemma}[{\cite[Lemma~2]{Yang2004}}]\label{lem.yangorder}
Let $N\ge1$ be an integer, let $g\in\mathbb{Z}$ satisfy $N\nmid g$, and let
$\gamma=\smat abcd\in\mathrm{SL}_{2}(\mathbb{Z})$.  Then the first term of
the $q$-expansion of $\eta_{N,g}(\gamma\tau)$ is $\varepsilon q^{\delta}$,
where $q=e^{2\pi i\tau}$, $|\varepsilon|=1$, and
\begin{equation*}
\delta=\frac{\gcd(c,N)^{2}}{2N}\,
P_{2}\!\left(\frac{ag}{\gcd(c,N)}\right)
\end{equation*}
with $P_{2}(t):=B_{2}(\{t\})$, where $\{t\}$ is the fractional part of $t$.
\end{lemma}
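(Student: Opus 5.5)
The plan is to reduce $\eta_{N,g}$ to a Siegel function and then use the transformation law of Siegel functions under $\mathrm{SL}_2(\mathbb{Z})$, together with a Hermite normal form factorization of $\smat N001\gamma$. For a real row vector $\mathbf a=(a_1,a_2)\notin\mathbb{Z}^2$, recall the Siegel function
\begin{equation*}
g_{\mathbf a}(\tau)=-q^{B_2(a_1)/2}e^{\pi i a_2(a_1-1)}\bigl(1-q^{a_1}e^{2\pi i a_2}\bigr)\prod_{n\ge1}\bigl(1-q^{n+a_1}e^{2\pi i a_2}\bigr)\bigl(1-q^{n-a_1}e^{-2\pi i a_2}\bigr).
\end{equation*}
Comparing with \eqref{eq.genetadef}, one sees directly that $\eta_{N,g}(\tau)=-g_{(g/N,0)}(N\tau)$. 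The two facts about Siegel functions I would invoke are the following.
\begin{enumerate}
\item[(a)] For $\sigma\in\mathrm{SL}_2(\mathbb{Z})$, $g_{\mathbf a}(\sigma\tau)=\zeta\, g_{\mathbf a\sigma}(\tau)$ with $\zeta$ a root of unity. This comes from the Klein form transformation $\mathfrak k_{\mathbf a}(\sigma\tau)=(c\tau+d)^{-1}\mathfrak k_{\mathbf a\sigma}(\tau)$ and $\eta(\sigma\tau)^2=\epsilon(c\tau+d)\eta(\tau)^2$.
\item[(b)] The leading term of $g_{\mathbf a}$ is a constant of modulus one times $q^{P_2(a_1)/2}$. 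This holds because $g_{\mathbf a+\mathbf m}=\zeta' g_{\mathbf a}$ for $\mathbf m\in\mathbb{Z}^2$, so we may assume $0\le a_1<1$. If $a_1\neq0$, the product visibly starts with $1$. If $a_1=0$, then $a_2\notin\mathbb{Z}$ and the extra factor $1-e^{2\pi i a_2}$ is a nonzero constant.
\end{enumerate}

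Next, write $\smat N001\gamma=\smat{Na}{Nb}{c}{d}$. This integer matrix has determinant $N$, and its first column has gcd $A:=\gcd(Na,c)=\gcd(N,c)$, since $\gcd(a,c)=1$. By the Hermite normal form, there is $\sigma\in\mathrm{SL}_2(\mathbb{Z})$ with
\begin{equation*}
\smat N001\gamma=\sigma\smat AB0D,\qquad AD=N,
\end{equation*}
and the first column of $\sigma$ is $(Na/A,\,c/A)^{T}$. Then $N(\gamma\tau)=\sigma\bigl((A\tau+B)/D\bigr)$. Combining with (a) gives
\begin{equation*}
\eta_{N,g}(\gamma\tau)=\zeta\, g_{(g/N,0)\sigma}\!\left(\frac{A\tau+B}{D}\right),
\end{equation*}
and the first coordinate of $(g/N,0)\sigma$ is $(g/N)(Na/A)=ag/A$.

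Finally, apply (b) in the variable $\tau'=(A\tau+B)/D$. Since $e^{2\pi i\tau'}=e^{2\pi iB/D}q^{A/D}$ and $A/D=A^2/N$, the leading exponent in $q$ is
\begin{equation*}
\frac{A^2}{N}\cdot\frac{P_2(ag/A)}{2}=\frac{\gcd(c,N)^2}{2N}P_2\!\left(\frac{ag}{\gcd(c,N)}\right).
\end{equation*}
The leading coefficient is a product of roots of unity and a unimodular constant, so it has modulus one; this gives $\delta$ as claimed.

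The main obstacle is bookkeeping rather than ideas. One must establish (a) carefully, tracking that the multiplier is genuinely a root of unity, using the standard Klein form theory (Kubert--Lang). One must also check the degenerate case $ag/A\in\mathbb{Z}$ in (b), where the leading coefficient $1-e^{2\pi i a_2'}$ must be shown nonzero, i.e.\ that the second coordinate of $(g/N,0)\sigma$ is non-integral. This holds because $\sigma$ is invertible over $\mathbb{Z}$ and $(g/N,0)\notin\mathbb{Z}^2$ when $N\nmid g$, so $(g/N,0)\sigma\notin\mathbb{Z}^2$.
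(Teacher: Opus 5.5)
The paper gives no proof of this lemma; it is quoted from Yang, and only the exponent $\delta$ is used afterwards (for the orders in \eqref{eq.nu-master} and the cusp tables). Your reduction is sound and correctly gives the exact leading exponent $\frac{A^2}{2N}P_2(ag/A)$ with $A=\gcd(c,N)$, together with non-vanishing of the leading coefficient. It consists of $\eta_{N,g}(\tau)=-g_{(g/N,0)}(N\tau)$, the factorization $\smat N001\gamma=\sigma\smat AB0D$, and the $\mathrm{SL}_2(\mathbb{Z})$-transformation of Siegel functions. Your observation that $(g/N,0)\sigma\notin\mathbb{Z}^2$ is exactly what the non-vanishing needs.

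\textbf{Where it fails.} The modulus claim in (b) is wrong in the case where the reduced first coordinate is $a_1=0$. There the leading coefficient of $g_{\mathbf a}$ is $-e^{-\pi i a_2}(1-e^{2\pi i a_2})$, whose modulus is $2|\sin\pi a_2|$. Nonzero does not mean unimodular, yet your last sentence treats $1-e^{2\pi i a_2'}$ as a unimodular constant. This case occurs precisely when $\gcd(c,N)\mid ag$. It is not vacuous: in the paper's own application it happens at every cusp $1/c$ with $5\nmid c$.

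\textbf{Why it cannot be repaired.} The assertion $|\varepsilon|=1$ is itself false in this case. Take $N=2$, $g=1$, so that $\eta_{2,1}(\tau)=q^{-1/12}(q;q^2)_\infty^2=\eta(\tau)^2/\eta(2\tau)^2$, and take $\gamma=\smat 0{-1}10$. Then
\[
\eta_{2,1}(-1/\tau)=\frac{(-i\tau)\,\eta(\tau)^2}{(-i\tau/2)\,\eta(\tau/2)^2}=2\,\frac{\eta(\tau)^2}{\eta(\tau/2)^2}=2q^{1/24}+\cdots .
\]
So $\delta=\frac14P_2(0)=\frac1{24}$ is correct, but $\varepsilon=2$. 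In your notation, $(g/N,0)\sigma=(0,-\tfrac12)$ and $|1-e^{-\pi i}|=2$.

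\textbf{What is actually true.} Your argument proves that the leading term is $\varepsilon q^{\delta}$ with $\varepsilon\neq0$, and that $|\varepsilon|=1$ whenever $ag/\gcd(c,N)\notin\mathbb{Z}$. Since the paper only uses $\delta$, this weaker statement is all that is needed downstream. You should state the result in that form rather than claim unimodularity in general.
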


We will also use the following consequence of the valence formula. See
\cite[Theorem~4.1.4]{Rankin1977}.
\begin{lemma}\label{lem.valence}
Let $\mathcal{C}$ be a complete set of inequivalent cusps of
$\Gamma_{0}(N)$, and let $F$ be a modular function on $\Gamma_{0}(N)$
that is holomorphic on $\mathbb{H}$. Suppose that integers $D_{s}$
satisfy
\begin{equation*}
\Ord_{\Gamma_{0}(N)}(F,s)\ge D_{s}
\qquad\text{for each }s\in\mathcal{C}\setminus\{\infty\},
\end{equation*}
and
\begin{equation*}
\Ord_{\Gamma_{0}(N)}(F,\infty)>
-\sum_{s\in\mathcal{C}\setminus\{\infty\}}D_{s}.
\end{equation*}
Then $F=0$.
\end{lemma}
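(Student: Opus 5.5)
We argue by contradiction: assume $F\neq0$ and show that the orders of $F$ are forced to sum to something positive, which the valence formula forbids for a nonzero weight-zero modular function. The natural tool is the valence formula for modular functions on a finite-index subgroup $\Gamma$ of $\mathrm{SL}_2(\mathbb{Z})$ (Rankin, Theorem~4.1.4), which we take as given. It says the following. Let $F\not\equiv0$ be a modular function on $\Gamma$. Measure its order at each cusp in the local uniformizer $q_{h_s}=e^{2\pi i\tau/h_s}$, as in the definition of $\Ord_\Gamma$ above. Measure its order at points of $\mathbb{H}$ with the usual weights $1/e_z$ at elliptic points. Then
\begin{equation*}
\sum_{z\in\Gamma\backslash\mathbb{H}}\frac{1}{e_z}\,\ord_z(F)\;+\;\sum_{s\in\mathcal{C}}\Ord_{\Gamma}(F,s)\;=\;\frac{k}{12}\,[\mathrm{SL}_2(\mathbb{Z}):\pm\Gamma]=0,
\end{equation*}
because $F$ has weight $k=0$.

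First I will check that the convention $\Ord_{\Gamma_0(N)}(F,s)=h(\Gamma_0(N),s)\,\ord(F,s)$ in \eqref{eq.twoorders} is exactly the order in the local parameter at $s$. This holds because the expansion of $F(\gamma\tau)$ is a Laurent series in $q_{h_s}$, so the cusp contributions in the formula are precisely the quantities $\Ord_{\Gamma_0(N)}(F,s)$ appearing in the hypotheses. One small point needs care. When $-I\in\Gamma_0(N)$ there are no irregular cusps, and we may use $\pm\Gamma$ in the definition of the width. Hence no half-integer orders arise, and the formula applies verbatim.

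Next, since $F$ is holomorphic on $\mathbb{H}$, every $\ord_z(F)\ge0$. The valence formula then gives
\begin{equation*}
\sum_{s\in\mathcal{C}}\Ord_{\Gamma_0(N)}(F,s)\le0 .
\end{equation*}
On the other hand, the hypotheses give
\begin{equation*}
\sum_{s\in\mathcal{C}}\Ord_{\Gamma_0(N)}(F,s)\ge \Ord_{\Gamma_0(N)}(F,\infty)+\sum_{s\ne\infty}D_s>0 .
\end{equation*}
This is a contradiction, so $F=0$.

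The only real obstacle is making sure the normalization of orders matches the one in Rankin's statement, that is, that the widths computed via Lemma~\ref{lem.width} are the ones used there. This is a bookkeeping check rather than a genuine difficulty. If one prefers to avoid the general valence formula, there is an alternative. Take the norm $\prod_{j}F(\gamma_j\tau)$ over coset representatives $\gamma_j$ of $\Gamma_0(N)$ in $\mathrm{SL}_2(\mathbb{Z})$. This is a level-one modular function that is holomorphic on $\mathbb{H}$, hence a polynomial in $j(\tau)$. Its order at $\infty$ equals the sum of the cusp orders of $F$, which is positive by the hypotheses. A polynomial in $j$ has order at $\infty$ equal to minus its degree, so it can never be positive unless the function vanishes. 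Therefore the norm vanishes identically, and hence $F=0$.
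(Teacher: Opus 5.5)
Your argument is correct and is essentially the paper's own: the paper gives no separate proof and simply invokes the valence formula (Rankin, Theorem~4.1.4). Your deduction is exactly that one-line consequence: nonnegative orders on $\mathbb{H}$ force $\sum_{s\in\mathcal{C}}\Ord_{\Gamma_0(N)}(F,s)\le 0$ for nonzero weight-zero $F$, while the hypotheses make this sum positive. Your alternative via the norm $\prod_j F(\gamma_j\tau)$, a polynomial in $j(\tau)$ whose order at $\infty$ is $\sum_s h_s\,\ord(F,s)=\sum_s\Ord_{\Gamma_0(N)}(F,s)$, is also valid.
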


\section{Proofs of Equations~\eqref{eq.010} and \eqref{eq.001}}\label{sec.init}
In this section we prove the initial values \eqref{eq.010} and
\eqref{eq.001}.  We first establish two preliminary results. As observed in \cite[p.~56]{Gua_RR}, we have
\begin{equation*}
r(\tau)=\frac{\eta_{5,1}(\tau)}{\eta_{5,2}(\tau)},
\end{equation*}
and hence, by \eqref{eq.dilation},
\begin{equation}\label{eq.rgeneta}
r_{j}(\tau)=\frac{\eta_{5j,j}(\tau)}{\eta_{5j,2j}(\tau)}
\qquad(j=1,2,3).
\end{equation}
Throughout this section we write $\zeta:=e^{2\pi i/5}$. The next lemma describes the transformation behavior of $r_{j}(\tau)$, $j\in\{1,2,3\}$, under the action of $\Gamma_{0}(30)$. Observe first that if
$\smat abcd\in\Gamma_{0}(30)$, then $ad-bc=1$ and $30\mid c$ imply that $a$ is
odd and coprime to $5$.

\begin{lemma}\label{lem.transform}
Let $\gamma=\smat abcd\in\Gamma_{0}(30)$ and $j\in\{1,2,3\}$.  Then
\begin{equation}\label{eq.gamact}
r_{j}(\gamma\tau)=
\begin{cases}
\zeta^{jab}\,r_{j}(\tau), & a\equiv\pm1\pmod5,\\[3pt]
-\,\zeta^{jab}\big/r_{j}(\tau), & a\equiv\pm2\pmod5.
\end{cases}
\end{equation}
\end{lemma}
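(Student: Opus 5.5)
The plan is to use the representation \eqref{eq.rgeneta}, $r_j=\eta_{5j,j}/\eta_{5j,2j}$, and to apply the transformation formula \eqref{eq.cne0} to the numerator and the denominator separately, with $N=5j$. Since $30\mid c$, we have $\gamma\in\Gamma_0(5j)$ for each $j\in\{1,2,3\}$. If $c=0$, then $\gamma=\pm\smat{1}{b}{0}{1}$ and the claim should follow directly from \eqref{eq.translation}. In that case $a\equiv\pm1$, and the factor works out to
\[
e^{\pi i b\cdot 5j\,(B_2(1/5)-B_2(2/5))}=e^{\pi i b j\cdot 2/5}\cdot(\text{check})=\zeta^{jb}.
\]
The sign case $a=-1$ is handled by the identity $\gamma\tau=(-\gamma)\tau$, together with $\zeta^{jab}$ being evaluated consistently (one can replace $\gamma$ by $-\gamma$ at the outset). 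So assume $c\neq0$.

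For $c\ne0$, formula \eqref{eq.cne0} gives
\[
\eta_{5j,g}(\gamma\tau)=\varepsilon(a,5jb,c/(5j),d)\,e^{\pi i(g^2ab/(5j)-gb)}\,\eta_{5j,ag}(\tau).
\]
The $\varepsilon$ factor depends only on $\gamma$ and $N$, not on $g$, so it cancels in the quotient. This gives
\[
r_j(\gamma\tau)=e^{\pi i\left(\frac{(j^2-4j^2)ab}{5j}-(j-2j)b\right)}\frac{\eta_{5j,aj}(\tau)}{\eta_{5j,2aj}(\tau)}=e^{\pi i\left(-\frac{3jab}{5}+jb\right)}\frac{\eta_{5j,aj}}{\eta_{5j,2aj}}.
\]

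Next, reduce the indices using \eqref{eq.indexshift} and \eqref{eq.reflect}. Write $a=5k+a_0$ with $a_0\in\{\pm1,\pm2\}$. Then $\eta_{5j,aj}=(-1)^{k}\eta_{5j,a_0 j}$ and $\eta_{5j,2aj}=(-1)^{2k}\eta_{5j,2a_0j}=\eta_{5j,2a_0j}$, and $\eta_{5j,-g}=-\eta_{5j,g}$.

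If $a_0=\pm1$, the quotient becomes $(-1)^k r_j$ (the signs from negating $g$ cancel in the ratio when $a_0=-1$).

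If $a_0=\pm2$, then $\eta_{5j,\pm2j}/\eta_{5j,\pm4j}$ appears. Since $\eta_{5j,4j}=\eta_{5j,j}$ by \eqref{eq.reflect}, this quotient is $\pm 1/r_j$, with the sign determined by how many negations occur.

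The remaining step is to collect all the roots of unity and show that they combine to $\zeta^{jab}$ in the first case and $-\zeta^{jab}$ in the second. The prefactor is $e^{-3\pi i jab/5}e^{\pi i jb}$. Since $a$ is odd, $e^{\pi i jb}=e^{\pi i jab}$, so the prefactor equals $e^{2\pi i jab/5}=\zeta^{jab}$ exactly. What is left is the sign $(-1)^k$ together with the signs from reduction and reflection. I must show that these combine to $+1$ when $a\equiv\pm1$ and to $-1$ when $a\equiv\pm2$, using that $a$ is odd. Oddness of $a$ forces $k$ to have a definite parity relative to $a_0$: for $a_0=1$, $k$ is even; for $a_0=-1$, $k$ is odd, and the negation contributes one extra sign; and similarly for $a_0=\pm2$, where $k$ is odd.

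This sign bookkeeping, done carefully in each of the four residue classes, is the main obstacle. I would verify each case against a concrete example, such as $a=1,-1,7,3$, to make sure the parity conventions in \eqref{eq.indexshift} are applied correctly.
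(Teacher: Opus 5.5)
Your argument follows the paper's proof: the same quotient $r_j=\eta_{5j,j}/\eta_{5j,2j}$, the translation formula when $c=0$, formula \eqref{eq.cne0} with $N=5j$ and cancelling $\varepsilon$-factors when $c\neq0$, then reduction of the indices. Your route from the prefactor to $\zeta^{jab}$, using $e^{\pi i jb}=e^{\pi i jab}$ for odd $a$, is valid. It is equivalent to the paper's substitution $u=\tfrac12(5-3a)\equiv a\pmod 5$.

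There is one slip in the sign bookkeeping you sketched. For $a_0=-1$, oddness of $a=5k-1$ forces $k$ to be \emph{even}, not odd: $a=-1$ gives $k=0$ and $a=9$ gives $k=2$. Also, as you said a few lines earlier, the two negations cancel in the ratio, so they contribute no extra sign. Your two misstatements offset each other, so the conclusion is still right, but the justification as written is wrong.

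The correct accounting is simpler. In all four cases the negation and reflection signs cancel inside the ratio, so the quotient is $(-1)^k r_j$ for $a_0=\pm1$ and $(-1)^k r_j^{-1}$ for $a_0=\pm2$. Oddness of $a$ gives $k\equiv 1+a_0\pmod 2$, so $k$ is even exactly when $a\equiv\pm1\pmod 5$.

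The paper avoids tracking the parity of $k$ by writing $a=10t+a_0$ with $a_0\in\{1,9,7,3\}$. The shifts are then even multiples of $5j$, so \eqref{eq.indexshift} introduces no sign, and the remaining signs are read off a fixed four-row table.
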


\begin{proof}
If $c=0$, then $ad=1$, so $a=d=\pm1$.  Hence
\begin{equation*}
\gamma\tau=\frac{a\tau+b}{a}=\tau+ab.
\end{equation*}
Applying \eqref{eq.translation} to the
numerator and denominator in \eqref{eq.rgeneta} gives
\begin{equation*}
\begin{aligned}
r_{j}(\gamma\tau)
&=r_{j}(\tau+ab)\\
&=e^{\pi i\cdot5jab
\left(B_{2}(\frac15)-B_{2}(\frac25)\right)}r_{j}(\tau)\\
&=e^{2\pi ijab/5}r_{j}(\tau)
=\zeta^{jab}r_{j}(\tau),
\end{aligned}
\end{equation*}
which proves \eqref{eq.gamact} when $c=0$.

Now suppose that $c\ne0$. 
Since $5j\mid30\mid c$,  we may apply \eqref{eq.cne0}, with  $N=5j$, to the
numerator and denominator in \eqref{eq.rgeneta}.  The
$\varepsilon$-factors are the same and cancel.  Therefore

\begin{equation}\label{eq.transformraw}
\begin{aligned}
r_{j}(\gamma\tau)
&=
e^{\pi i\left(
\frac{(j^{2}-4j^{2})ab}{5j}-(j-2j)b
\right)}
\frac{\eta_{5j,aj}(\tau)}
{\eta_{5j,2aj}(\tau)}\\
&=
e^{\pi i\,jb(5-3a)/5}
\frac{\eta_{5j,aj}(\tau)}
{\eta_{5j,2aj}(\tau)}.
\end{aligned}
\end{equation}

Because $a$ is odd, $u:=\tfrac12(5-3a)\in\mathbb{Z}$.  As
$2u\equiv2a\pmod5$, we have $u\equiv a\pmod5$, and thus
\begin{equation}
e^{\pi i\,\frac{jb(5-3a)}{5}}=e^{2\pi i\,jbu/5}=\zeta^{jbu}=\zeta^{jab}.
\label{eq.expuniform}
\end{equation}

Since $a$ is odd and coprime to $5$, we may write
\begin{equation*}
a=10t+a_0,
\qquad
a_0\in\{1,9,7,3\},
\end{equation*}
where these four choices correspond, respectively, to
$a\equiv1,-1,2,-2\pmod5$. We then have
\begin{equation*}
aj=a_0j+2t(5j),
\qquad
2aj=2a_0j+4t(5j).
\end{equation*}
Thus, by \eqref{eq.indexshift},
\begin{equation*}
\frac{\eta_{5j,aj}(\tau)}
{\eta_{5j,2aj}(\tau)}
=
\frac{\eta_{5j,a_0j}(\tau)}
{\eta_{5j,2a_0j}(\tau)}.
\end{equation*}	
Using \eqref{eq.Nshift}, \eqref{eq.indexshift} and \eqref{eq.reflect}, we obtain
\begin{equation*}
\begin{array}{c|c|c|c}
a_{0}
& a\pmod5
& \eta_{5j,a_{0}j}(\tau)
& \eta_{5j,2a_{0}j}(\tau)\\ \hline
1 & 1
& \eta_{5j,j}(\tau)
& \eta_{5j,2j}(\tau)\\
9 & -1
& -\eta_{5j,j}(\tau)
& -\eta_{5j,2j}(\tau)\\
7 & 2
& -\eta_{5j,2j}(\tau)
& \eta_{5j,j}(\tau)\\
3 & -2
& \eta_{5j,2j}(\tau)
& -\eta_{5j,j}(\tau).
\end{array}
\end{equation*}
Therefore,
\begin{equation}\label{eq.quot}
\frac{\eta_{5j,aj}(\tau)}
{\eta_{5j,2aj}(\tau)}
=
\begin{cases}
r_{j}(\tau),
& a\equiv\pm1\pmod5,\\[3pt]
-r_{j}(\tau)^{-1},
& a\equiv\pm2\pmod5.
\end{cases}
\end{equation}
Substituting \eqref{eq.expuniform} and \eqref{eq.quot} into
\eqref{eq.transformraw} yields \eqref{eq.gamact}.
\end{proof}

\begin{lemma}\label{lem.modularity}
For all integers $m,n,p$, the function $G=G(m,n,p)$ of \eqref{eq.G-def} is
holomorphic on $\mathbb{H}$ and is a modular function on $\Gamma_{0}(30)$.
\end{lemma}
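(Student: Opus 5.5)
We prove holomorphy on $\mathbb{H}$ first, then invariance under $\Gamma_{0}(30)$, and finally the cusp condition of Definition~\ref{def.modfn}.

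\textbf{Holomorphy.} By \eqref{eq.rgeneta}, each $r_j$ is a quotient of generalized eta functions. The product in \eqref{eq.genetadef} converges and has no zeros or poles on $\mathbb{H}$, since $|q|<1$. Hence $r_1,r_2,r_3$ are holomorphic and nonvanishing on $\mathbb{H}$. The same then holds for $w$ in \eqref{eq.w-def} and for $1/w$, so $G=w+(-1)^{m+n}/w$ is holomorphic on $\mathbb{H}$.

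\textbf{Invariance.} Let $\gamma=\smat abcd\in\Gamma_{0}(30)$ and apply Lemma~\ref{lem.transform} to each factor of $w=r_1^{-2m-n}r_2^{m-n-p}r_3^{n-p}$.

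In the case $a\equiv\pm1\pmod5$, the phase picked up is $\zeta^{ab\,e}$ with
\begin{equation*}
e=(-2m-n)+2(m-n-p)+3(n-p)=-5p\equiv0\pmod5.
\end{equation*}
Thus $w(\gamma\tau)=w(\tau)$, and so $G(\gamma\tau)=G(\tau)$.

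In the case $a\equiv\pm2\pmod5$, each $r_j$ becomes $-\zeta^{jab}/r_j$. The $\zeta$-powers again cancel by the same computation, and the signs contribute
\begin{equation*}
(-1)^{(-2m-n)+(m-n-p)+(n-p)}=(-1)^{-m-n-2p}=(-1)^{m+n}.
\end{equation*}
Hence $w(\gamma\tau)=(-1)^{m+n}/w(\tau)$. Then
\begin{equation*}
G(\gamma\tau)=(-1)^{m+n}/w+(-1)^{m+n}\cdot(-1)^{m+n}w=G(\tau).
\end{equation*}
This sign bookkeeping is exactly why $G$ was defined with the factor $(-1)^{m+n}$.

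\textbf{Cusp expansions.} For arbitrary $\gamma\in\mathrm{SL}_2(\mathbb{Z})$ we need an expansion of $G(\gamma\tau)$ in fractional powers of $q$ with finitely many negative terms. Lemma~\ref{lem.yangorder} gives the leading term $\varepsilon q^{\delta}$ of each $\eta_{N,g}(\gamma\tau)$. The remaining issue is that the rest of the expansion is a power series in some $q^{1/n}$.

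To settle this, write $\gamma$ as an element of $\Gamma_{0}(5j)$ times a coset representative, or simply use the standard fact (Yang's transformation theory) that $\eta_{N,g}(\gamma\tau)$ equals a constant times a product of the same shape in $q^{1/N}$, possibly with roots of unity inserted. That product is a holomorphic $q^{1/N'}$-series with nonzero constant term times $q^{\delta}$. Alternatively, note that $r(\tau)$ is a modular function for $\Gamma(5)$, hence $r_j$ is one for $\Gamma(15)$ (or $\Gamma(5j)$). A finite product of such functions is meromorphic at every cusp with expansion in $q^{1/n}$ for $n=30$ or a multiple of it.

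With either route, $w^{\pm1}(\gamma\tau)$ is a Laurent series in some $q^{1/n}$ bounded below, and so is $G(\gamma\tau)$. Since $G$ is $\Gamma_0(30)$-invariant, the widths from Lemma~\ref{lem.width} then make the expansion integral in $q_h$.

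\textbf{Expected obstacle.} The main step requiring care is this cusp condition, namely justifying that the full expansion (not just its leading term) lies in fractional powers of $q$. I would cite the $\Gamma(5)$-modularity of $r$, or Yang's general transformation formula for $\eta_{N,g}$ under all of $\mathrm{SL}_2(\mathbb{Z})$, rather than compute directly.
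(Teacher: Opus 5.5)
Your proposal is correct and follows the paper's proof essentially step for step. Holomorphy comes from the nonvanishing of the $\eta_{5j,g}$, $\Gamma_0(30)$-invariance comes from Lemma~\ref{lem.transform} with the exponent sums $-5p$ and $-m-n-2p$, and the cusp condition comes from Yang's results. You are more careful than the paper at the cusps, where you note that Lemma~\ref{lem.yangorder} only gives the leading term and back it up with Yang's full transformation formula or the $\Gamma(5j)$-invariance of $r_j$. One small slip there: $r_2$ is invariant under $\Gamma(10)$, not $\Gamma(15)$, so $\Gamma(30)$ is the natural common group to use.
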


\begin{proof}
Let $\gamma=\smat abcd\in\Gamma_{0}(30)$.  As noted  {above},
we have $\gcd(a,5)=1$, so $a\equiv\pm1$ or $a\equiv\pm2\pmod5$.

If $a\equiv\pm1\pmod5$, then \eqref{eq.w-def} and Lemma~\ref{lem.transform} give
\begin{equation*}
\begin{aligned}
w(\gamma\tau)
&=\zeta^{ab\left[(-2m-n)+2(m-n-p)+3(n-p)\right]}w(\tau)\\
&=\zeta^{-5abp}w(\tau)
=w(\tau).
\end{aligned}
\end{equation*}
Similarly, if $a\equiv\pm2\pmod5$, then \eqref{eq.w-def} and
Lemma~\ref{lem.transform} yield
\begin{equation*}
\begin{aligned}
w(\gamma\tau)
&=(-1)^{(-2m-n)+(m-n-p)+(n-p)}
\zeta^{-5abp}w(\tau)^{-1}\\
&=(-1)^{m+n}w(\tau)^{-1}.
\end{aligned}
\end{equation*}
In either case, \eqref{eq.G-def} gives $G(\gamma\tau)=G(\tau)$.  Thus $G$ is
invariant under $\Gamma_{0}(30)$.

Each $\eta_{5j,g}$ is holomorphic and nonvanishing on
$\mathbb{H}$, so it follows from \eqref{eq.rgeneta} that
$r_{j}^{\pm1}$ is holomorphic on $\mathbb{H}$.  Hence, by \eqref{eq.w-def}
and \eqref{eq.G-def}, $G$ is holomorphic
on $\mathbb{H}$.

Finally, fix $\gamma\in\mathrm{SL}_{2}(\mathbb{Z})$.
Lemma~\ref{lem.yangorder} shows that each
$\eta_{5j,g}(\gamma\tau)$ has a $q$-expansion with finitely many negative exponents. It follows from
\eqref{eq.rgeneta}, \eqref{eq.w-def}, and \eqref{eq.G-def}
that the same is true of $r_j(\gamma\tau)$ and
$G(\gamma\tau)$. This completes
the proof.	
\end{proof}

{We now proceed to proofs of the initial conditions found in \eqref{eq.010} and \eqref{eq.001}.} 

\begin{proof}[Proof of \eqref{eq.010} and \eqref{eq.001}]
We need to show that
the two functions
\begin{align}
F_{1}&:=G(0,1,0)-\bigl(1-V\bigr),
\label{eq.F1}\\
F_{2}&:=G(0,0,1)-\bigl(U-2\bigr)
\label{eq.F2}
\end{align}
vanish identically. By Lemmas~\ref{lem.modularity} and~\ref{lem.newman}, the functions
$G(0,1,0)$, $G(0,0,1)$, $U$ and $V$, and hence $F_{1}$ and
$F_{2}$, are holomorphic on $\mathbb{H}$ and are modular functions on
$\Gamma_{0}(30)$. 

Lemma~\ref{lem.cusps} shows that $\Gamma_{0}(30)$ has eight
cusps, one for each divisor of $30$, represented by the points
$1/c$ with $c\mid30$. Here the cusps $1/1$ and $1/30$ are $\Gamma_{0}(30)$-equivalent to $0/1$ and
$\infty$, respectively. Lemma~\ref{lem.width} gives the widths of these cusps:
\begin{equation}
(h_{1/1},h_{1/2},h_{1/3},h_{1/5},h_{1/6},h_{1/10},h_{1/15},h_{1/30})
=(30,15,10,6,5,3,2,1).
\label{eq.widths}
\end{equation}

Fix $c\mid30$ and $j\in\{1,2,3\}$, and set $\ell:=\gcd(c,5j)$.
Applying Lemma~\ref{lem.yangorder} to the numerator and denominator of
\eqref{eq.rgeneta} gives
\begin{equation}
\ord\bigl(r_{j},\tfrac1c\bigr)
=\ord\bigl(\eta_{5j,j},\tfrac1c\bigr)-\ord\bigl(\eta_{5j,2j},\tfrac1c\bigr)
=\frac{\ell^{2}}{10j}
\Bigl(P_{2}\bigl(\tfrac{j}{\ell}\bigr)-P_{2}\bigl(\tfrac{2j}{\ell}\bigr)\Bigr).
\label{eq.nu-master}
\end{equation}
By \eqref{eq.w-def} and \eqref{eq.G-def},
\begin{equation*}
G(0,1,0)=-r_{1}r_{2}r_{3}^{-1}+\bigl(r_{1}r_{2}r_{3}^{-1}\bigr)^{-1},
\qquad
G(0,0,1)=r_{2}r_{3}+(r_{2}r_{3})^{-1}.
\end{equation*}
It follows that
\begin{equation*}
\ord\bigl(G(0,1,0),\tfrac1c\bigr)
\geq-\left|\ord\bigl(r_1r_2r_3^{-1},\tfrac1c\bigr)\right|,
\qquad
\ord\bigl(G(0,0,1),\tfrac1c\bigr)
\geq-\left|\ord\bigl(r_2r_3,\tfrac1c\bigr)\right|.
\end{equation*}
The  invariant orders for $r_{1}r_{2}r_{3}^{-1}$ and $r_{2}r_{3}$ obtained from \eqref{eq.nu-master} and the resulting bounds for $G(0,1,0)$ and $G(0,0,1)$ at the cusps other than $\infty$ are listed in Table~\ref{tab.rorders}.	

\begin{table}[H]
\centering
\renewcommand{\arraystretch}{1.25}
\begin{tabular}{|c||c|c|c|c|c|c|c|}
\hline
cusp $\tfrac1c$ & $\tfrac11$ & $\tfrac12$ & $\tfrac13$ & $\tfrac15$
& $\tfrac16$ & $\tfrac1{10}$ & $\tfrac1{15}$\\
\hline
$\ord\bigl(r_1r_2r_3^{-1},\tfrac1c\bigr)$ & $0$ & $0$ & $0$
& $\tfrac16$ & $0$ & $\tfrac23$ & $-\tfrac12$\\
$\ord\bigl(r_2r_3,\tfrac1c\bigr)$ & $0$ & $0$ & $0$
& $-\tfrac16$ & $0$ & $\tfrac13$ & $\tfrac12$\\
\hline
$\ord\bigl(G(0,1,0),\tfrac1c\bigr)\ge$ & $0$ & $0$ & $0$
& $-\tfrac16$ & $0$ & $-\tfrac23$ & $-\tfrac12$\\
$\ord\bigl(G(0,0,1),\tfrac1c\bigr)\ge$ & $0$ & $0$ & $0$
& $-\tfrac16$ & $0$ & $-\tfrac13$ & $-\tfrac12$\\
\hline
\end{tabular}
\caption{Invariant orders and lower bounds at the cusps of $\Gamma_0(30)$ other than $\infty$}
\label{tab.rorders}
\end{table}

Since $G(m,n,p)$ is invariant under $\Gamma_{0}(30)$, we get
\begin{equation*}
\Ord_{\Gamma_{0}(30)}\bigl(G(m,n,p),\tfrac1c\bigr)
=h_{1/c}\,\ord\bigl(G(m,n,p),\tfrac1c\bigr)
\end{equation*}
from \eqref{eq.twoorders}. Multiplying the last two rows of Table~\ref{tab.rorders} by the
corresponding widths in \eqref{eq.widths} produces the first two rows of
Table~\ref{tab.Gorders}. The next two rows contain the orders of $U$
and $V$ from Lemma~\ref{lem.ligorder}.
Finally, \eqref{eq.F1} and
\eqref{eq.F2} give
\begin{equation*}
\Ord_{\Gamma_{0}(30)}
\bigl(F_{1},\tfrac1c\bigr)
\geq
\min\!\left\{
\Ord_{\Gamma_{0}(30)}
\bigl(G(0,1,0),\tfrac1c\bigr),
\Ord_{\Gamma_{0}(30)}
\bigl(V,\tfrac1c\bigr),
0
\right\},
\end{equation*}
and
\begin{equation*}
\Ord_{\Gamma_{0}(30)}
\bigl(F_{2},\tfrac1c\bigr)
\geq
\min\!\left\{
\Ord_{\Gamma_{0}(30)}
\bigl(G(0,0,1),\tfrac1c\bigr),
\Ord_{\Gamma_{0}(30)}
\bigl(U,\tfrac1c\bigr),
0
\right\}.
\end{equation*}
These minima yield the last two rows of Table~\ref{tab.Gorders}.

\begin{table}[H]
\centering
\renewcommand{\arraystretch}{1.25}
\begin{tabular}{|c||c|c|c|c|c|c|c|}
\hline
cusp $\tfrac1c$ & $\tfrac11$ & $\tfrac12$ & $\tfrac13$ & $\tfrac15$
& $\tfrac16$ & $\tfrac1{10}$ & $\tfrac1{15}$\\
\hline
$\Ord_{\Gamma_{0}(30)}\bigl(G(0,1,0),\tfrac1c\bigr)\ge$
& $0$ & $0$ & $0$ & $-1$ & $0$ & $-2$ & $-1$\\
$\Ord_{\Gamma_{0}(30)}\bigl(G(0,0,1),\tfrac1c\bigr)\ge$
& $0$ & $0$ & $0$ & $-1$ & $0$ & $-1$ & $-1$\\
\hline
$\Ord_{\Gamma_{0}(30)}\bigl(U,\tfrac1c\bigr)$ & $0$ & $2$ & $2$ & $-1$ & $0$ & $-1$ & $-1$\\
$\Ord_{\Gamma_{0}(30)}\bigl(V,\tfrac1c\bigr)$ & $2$ & $1$ & $0$ & $-1$ & $1$ & $-2$ & $-1$\\
\hline
$\Ord_{\Gamma_{0}(30)}\bigl(F_{1},\tfrac1c\bigr)\ge$ & $0$ & $0$ & $0$ & $-1$ & $0$ & $-2$ & $-1$\\
$\Ord_{\Gamma_{0}(30)}\bigl(F_{2},\tfrac1c\bigr)\ge$ & $0$ & $0$ & $0$ & $-1$ & $0$ & $-1$ & $-1$\\
\hline
\end{tabular}
\caption{Orders and lower bounds at the cusps of $\Gamma_0(30)$ other than $\infty$}
\label{tab.Gorders}
\end{table}

Away from infinity, the bounds for $F_{1}$ and $F_{2}$ sum to $-4$
and $-3$, respectively.  
Direct $q$-expansions at $\infty$ give
\begin{align*}
G(0,1,0)&=2q+q^{2}-3q^{3}+q^{4}+O(q^{5}),\\
1-V     &=2q+q^{2}-3q^{3}+q^{4}+O(q^{5}),\\
G(0,0,1)&=q^{-1}+2q+q^{2}-q^{3}+O(q^{4}),\\
U-2     &=q^{-1}+2q+q^{2}-q^{3}+O(q^{4}),
\end{align*}
so that, by \eqref{eq.F1} and \eqref{eq.F2},
\begin{equation*}
\Ord_{\Gamma_{0}(30)}(F_{1},\infty)\geq5,
\qquad
\Ord_{\Gamma_{0}(30)}(F_{2},\infty)\geq4.
\end{equation*}
Lemma~\ref{lem.valence} therefore implies
$F_{1}=F_{2}=0$, which proves \eqref{eq.010} and \eqref{eq.001}.	

\end{proof}

\section{Proofs of \eqref{eq.recm}, \eqref{eq.recn}, and \eqref{eq.recp}}
\label{sec.product}
{We begin this section by proving the following important lemma.}
\begin{lemma}
For all integers $m,n,p,i,j,k$, we have
\begin{equation}
G(m,n,p)\,G(i,j,k)
=G(m+i,\,n+j,\,p+k)
+(-1)^{i+j}G(m-i,\,n-j,\,p-k).
\label{eq.productG}
\end{equation}
\end{lemma}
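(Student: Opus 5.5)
This is a purely algebraic identity, so no modularity is needed. The plan is to exploit the fact that the map $(m,n,p)\mapsto w(m,n,p)$ is multiplicative. From \eqref{eq.w-def}, the exponents of $r_1,r_2,r_3$ are linear forms in $(m,n,p)$. Hence
\begin{equation*}
w(m,n,p)\,w(i,j,k)=w(m+i,n+j,p+k),\qquad \frac{w(m,n,p)}{w(i,j,k)}=w(m-i,n-j,p-k).
\end{equation*}
I will write $w_1:=w(m,n,p)$, $w_2:=w(i,j,k)$, $\epsilon_1:=(-1)^{m+n}$ and $\epsilon_2:=(-1)^{i+j}$. By \eqref{eq.G-def}, $G(m,n,p)=w_1+\epsilon_1 w_1^{-1}$ and $G(i,j,k)=w_2+\epsilon_2w_2^{-1}$.

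Next I will expand the product into four terms:
\begin{equation*}
G(m,n,p)G(i,j,k)=w_1w_2+\epsilon_1\epsilon_2\,(w_1w_2)^{-1}+\epsilon_2\,w_1w_2^{-1}+\epsilon_1\,w_1^{-1}w_2 .
\end{equation*}

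The first two terms combine to $G(m+i,n+j,p+k)$. This holds because $\epsilon_1\epsilon_2=(-1)^{(m+i)+(n+j)}$ is exactly the sign that \eqref{eq.G-def} attaches to $w(m+i,n+j,p+k)^{-1}$.

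The last two terms, after factoring out $\epsilon_2$, become
\begin{equation*}
\epsilon_2\Bigl(w(m-i,n-j,p-k)+\epsilon_1\epsilon_2\,w(m-i,n-j,p-k)^{-1}\Bigr),
\end{equation*}
using $\epsilon_2^{-1}=\epsilon_2$. Since $\epsilon_1\epsilon_2=(-1)^{(m-i)+(n-j)}$, the bracket equals $G(m-i,n-j,p-k)$, and the prefactor $\epsilon_2=(-1)^{i+j}$ gives \eqref{eq.productG}.

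The only point needing care is the sign bookkeeping. One must check that the parity $(-1)^{m\pm i+n\pm j}$ equals $\epsilon_1\epsilon_2$ in both the sum and the difference cases, which holds because $\pm i\equiv i \pmod 2$. One must also confirm the linearity of the exponent vector in \eqref{eq.w-def}, including the $q^{-p}$ factor. Both are routine, so I expect no real obstacle.
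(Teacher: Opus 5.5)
Your proposal is correct and follows essentially the same route as the paper: both expand the product of the two binomials $w+(-1)^{m+n}w^{-1}$ and regroup the four terms into $G(m+i,n+j,p+k)$ and $(-1)^{i+j}G(m-i,n-j,p-k)$. The paper writes out the $r_1,r_2,r_3$ exponents explicitly, while you compress that bookkeeping into the multiplicativity of $w$; your sign checks $\epsilon_1\epsilon_2=(-1)^{(m\pm i)+(n\pm j)}$ are exactly what is needed.
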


\begin{proof}
By \eqref{eq.w-def} and \eqref{eq.G-def},
\begin{align*}
&G(m,n,p)\,G(i,j,k)\\
&=\bigl(r_{1}^{-2m-n}r_{2}^{m-n-p}r_{3}^{n-p}
+(-1)^{m+n}r_{1}^{2m+n}r_{2}^{-m+n+p}r_{3}^{-n+p}\bigr)\\
&\qquad\times\bigl(r_{1}^{-2i-j}r_{2}^{i-j-k}r_{3}^{j-k}
+(-1)^{i+j}r_{1}^{2i+j}r_{2}^{-i+j+k}r_{3}^{-j+k}\bigr)\\
&=\Bigl(
r_{1}^{-2(m+i)-(n+j)}
r_{2}^{(m+i)-(n+j)-(p+k)}
r_{3}^{(n+j)-(p+k)}\\
&\qquad\quad
+(-1)^{(m+i)+(n+j)}
r_{1}^{2(m+i)+(n+j)}
r_{2}^{-(m+i)+(n+j)+(p+k)}
r_{3}^{-(n+j)+(p+k)}\Bigr)\\
&\quad+(-1)^{i+j}\Bigl(
r_{1}^{-2(m-i)-(n-j)}
r_{2}^{(m-i)-(n-j)-(p-k)}
r_{3}^{(n-j)-(p-k)}\\
&\qquad\quad
+(-1)^{(m-i)+(n-j)}
r_{1}^{2(m-i)+(n-j)}
r_{2}^{-(m-i)+(n-j)+(p-k)}
r_{3}^{-(n-j)+(p-k)}\Bigr)\\
&=G(m+i,n+j,p+k)
+(-1)^{i+j}G(m-i,n-j,p-k).
\end{align*}
\end{proof}

\begin{proof}[Proof of {\eqref{eq.recm}, \eqref{eq.recn}, and \eqref{eq.recp}}]

Taking $(i,j,k)$ to be $(1,0,0)$, $(0,1,0)$ and $(0,0,1)$ in
\eqref{eq.productG} and rearranging, we obtain
\begin{align*}
G(m+1,n,p)&=G(1,0,0)\,G(m,n,p)+G(m-1,n,p),\\
G(m,n+1,p)&=G(0,1,0)\,G(m,n,p)+G(m,n-1,p),\\
G(m,n,p+1)&=G(0,0,1)\,G(m,n,p)-G(m,n,p-1).
\end{align*}
Substituting the initial values \eqref{eq.100}, \eqref{eq.010}, and
\eqref{eq.001} yields \eqref{eq.recm}, \eqref{eq.recn}, and
\eqref{eq.recp}, respectively.
\end{proof}

\bibliographystyle{abbrv}
%\nocite{*}
\bibliography{rogers_ramanujan}

\end{document}